\definecolor{darkgreen}{rgb}{0,0.45,0}
\theoremstyle{definition}
\theoremstyle{plain}
\newtheorem{theo}{Théorème}[section]
\newtheorem{theoa}{Theorem}[section]
\newtheorem{prop}[theo]{Proposition}
\newtheorem{coroa}[theo]{Corollary}
\newtheorem{defia}[theo]{Definition}
\newtheorem{lemma}[theo]{Lemma}
\theoremstyle{remark}
\newtheorem{remaa}[theo]{Remark}
\newtheorem{exema}[theo]{Example}
\numberwithin{equation}{subsubsection}
\numberwithin{equation}{subsubsection}
\author{Mathieu Daylies}
\address{mathieu.daylies@mathematik.tu-chemnitz.de }
\title{Base Change for coherent cohomology in Berkovich geometry}
\def\Spec{\operatorname{Spec}}
\theoremstyle{plain}
\newtheorem{theointro}{Theorem}
\begin{document}


\maketitle

We prove some base change theorems for coherent cohomology in the setting of Berkovich spaces. In this setting, we get a flat base change theorem, and some proper base change theorems that are analogue to theorems from scheme theory.

\section{Introduction}

\subsection{Motivation}

  When considering a cartesian square of spaces, base change theorems relate how the direct and the inverse image of sheaves fit together. To be more precise, let $$ \xymatrix{
    X' \ar[r]^{f'} \ar[d]^{p'}  & S' \ar[d]^{p} \\
    X \ar[r]^{f} & S
  }$$
 be a cartesian square in some category of spaces with sheaves on it (schemes, topological spaces, analytic spaces,...) and let $\mathcal{F}$ be a sheaf on $X$. Then base change theorems are statements concerning the natural transformation of sheaves $G:p^*R^i(f_*\mathcal{F})\to R^if'_* (p'^* \mathcal{F})$. 
 
 Suppose that all the spaces involved in the previous diagram are schemes, and $\mathcal{F}$ is a quasi-coherent sheaf. Then the flat base change theorem from algebraic geometry states that if $p$ is flat, and $f$ is quasi-compact and quasi-separated, then $G$ is an isomorphism (see  \cite[\href{https://stacks.math.columbia.edu/tag/02KH}{Tag 02KH}]{stacks-project}). If the base $S$ is noetherian, $f$ is proper, and $\mathcal{F}$ is flat over $S$, we also have proper base change theorems for coherent cohomology that are a bit weaker (see  \cite[\href{https://stacks.math.columbia.edu/tag/07VL}{Tag 07VL}]{stacks-project} or chapter 2, Section 5 of \autocite{mumford1974abelian}).
 
 Proper base change theorems are especially useful when we try to figure out how cohomology behaves in familly : we can often relate the cohomology on the fiber and  the cohomology on some neighboorhood of a point (cf corollary 2 and 3 of section 5 of \autocite{mumford1974abelian}). With motivation linked to the study of relative ampleness locus of a morphism between Berkovich analytic spaces, we are interested in those base change theorems for coherent cohomology in the non-archimedean analytic setting. Berkovich stated some of these theorems at the end of section 3.3 of \autocite{berkovich2012spectral}. In his work, these theorems are deduced from theorem 3.3.9 of loc.cit., whose proof is supposed to be completely analogous to the proofs of the corresponding facts in algebraic geometry. However, in analytic geometry, the tensor products that appear are replaced by completed tensor products which are not really compatible with tools from homological algebra used in the previous proof. This fact leads us to think that the proof in loc. cit. is not correct.
 
 Note that in Berkovich geometry, we also have an étale cohomology and some base change theorems for étale cohomology (see \autocite{https://doi.org/10.48550/arxiv.1101.0683} for a reader friendly introduction to étale cohomology of Berkovich analytic spaces, and section 7.7 and 7.8 of \autocite{PMIHES_1993__78__5_0} for the precise theorems), but we only deal with coherent cohomology here.
 
 \subsection{Overview of our results}
 
 \subsubsection{Flat base change for coherent cohomology}
 
In Berkovich geometry, we have at our disposal a flatness theory, due to Ducros in \autocite{ducros2017families}. In algebraic geometry, the easiest base change theorem is the flat base change theorem, so it is natural to try to prove such a theorem in the analytic setting. We obtain the following theorem $\ref{bigtheoflat}$: 

\begin{theointro} \label{flatintro}Suppose $$ \xymatrix{
    X' \ar[r]^{f'} \ar[d]^{p'}  & S' \ar[d]^{p} \\
    X \ar[r]^{f} & S
  }$$
is a cartesian diagram of $k$-analytic spaces with $f$ proper and $p$ flat.
Let $\mathcal{F}$ be a coherent sheaf on $X$. Then for all $i\geq 0$ the natural morphism of coherent $\mathcal{O}_{S'}$-modules $p^*(R^i f_* \mathcal{F})\to R^i f'_* (p'^* \mathcal{F})$ is an isomorphism.
\end{theointro}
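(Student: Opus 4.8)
The plan is to reduce the statement to a comparison of Čech cohomologies over an affinoid base, and then to isolate the one genuinely analytic difficulty: showing that the completed tensor product along the flat map commutes with taking cohomology.

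First, since $f$ is proper, the sheaves $R^i f_*\mathcal F$ are coherent by the finiteness theorem for proper morphisms (in the sense of Kiehl and Berkovich); likewise $f'$ is proper as a base change of $f$ and $p'^*\mathcal F$ is coherent, so $R^i f'_* (p'^*\mathcal F)$ is coherent as well. Hence $G$ is a morphism of coherent $\mathcal O_{S'}$-modules, and being an isomorphism is local on $S'$ and on $S$. I would therefore reduce to the case where $S = \mathcal M(A)$ and $S' = \mathcal M(B)$ are affinoid and $p$ corresponds to a flat morphism $A\to B$. For coherent sheaves on an affinoid space, being an isomorphism can be checked on global sections (equivalence between coherent sheaves and finite modules), so it suffices to show that
$$H^i(X,\mathcal F)\otimes_A B \longrightarrow H^i(X', p'^*\mathcal F)$$
is an isomorphism of finite $B$-modules, where I have used that $p^*$ of a coherent sheaf is computed on global sections by $\otimes_A B$ and that $H^i(X,\mathcal F)$ is a finite $A$-module, so that the completed and the ordinary tensor products agree.

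Next I would compute both sides by Čech cohomology. Choose a finite cover $\mathcal U = \{U_j\}$ of $X$ by affinoid domains with affinoid intersections; since coherent sheaves are acyclic on affinoids, the Čech complex $C^\bullet = C^\bullet(\mathcal U,\mathcal F)$ computes $H^\bullet(X,\mathcal F)$, and its terms are finite modules over the $A$-affinoid algebras $\mathcal O(U_\sigma)$. The cover $\mathcal U' = \{U_j\times_S S'\}$ is again a finite affinoid cover of $X'$, and for each simplex $\sigma$ one has $\mathcal O(U'_\sigma) = \mathcal O(U_\sigma)\widehat\otimes_A B$ and $(p'^*\mathcal F)(U'_\sigma) = \mathcal F(U_\sigma)\widehat\otimes_A B$, by associativity of the completed tensor product. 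Thus the Čech complex of $p'^*\mathcal F$ for $\mathcal U'$ is exactly $C^\bullet\widehat\otimes_A B$, and it computes $H^\bullet(X', p'^*\mathcal F)$. The statement to prove becomes
$$H^i(C^\bullet)\otimes_A B \;\xrightarrow{\ \sim\ }\; H^i\!\left(C^\bullet\widehat\otimes_A B\right),$$
i.e. that the functor $-\widehat\otimes_A B$ commutes with the cohomology of $C^\bullet$.

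The hard part will be exactly this commutation, and it is here that the objection raised in the introduction bites: $-\widehat\otimes_A B$ is only a completion of a right-exact functor, and ordinary flatness of $B$ over $A$ does not formally guarantee its exactness on the infinite-dimensional Banach $A$-modules $C^p$ (these are finite over the affinoid algebras $\mathcal O(U_\sigma)$ but not over $A$). My strategy would be: for each fixed $\sigma$, the functor $-\widehat\otimes_A B$ restricted to finite $\mathcal O(U_\sigma)$-modules coincides with $-\otimes_{\mathcal O(U_\sigma)}\bigl(\mathcal O(U_\sigma)\widehat\otimes_A B\bigr)$, and this is exact because $\mathcal O(U_\sigma)\widehat\otimes_A B$ is flat over $\mathcal O(U_\sigma)$ --- the latter being the base change of the flat morphism $p$ along $U_\sigma\to S$, using the stability of Ducros's flatness under base change. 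The remaining work, and the main obstacle, is to upgrade these termwise exactness statements --- whose differentials are only $A$-linear and cross between different affinoid rings --- into the exactness of $-\widehat\otimes_A B$ on the whole complex $C^\bullet$. I expect to do this by factoring the map $H^i(C^\bullet)\otimes_A B\to H^i(C^\bullet\widehat\otimes_A B)$ through $H^i(C^\bullet\otimes_A B)$ (the first arrow being an isomorphism by ordinary flatness of $B$) and reducing the comparison $H^i(C^\bullet\otimes_A B)\to H^i(C^\bullet\widehat\otimes_A B)$ to a local completion-versus-flatness statement that can be verified stalkwise, since flatness is a pointwise condition. Once this commutation is established, the finiteness of $H^i(C^\bullet)=H^i(X,\mathcal F)$ over $A$ guarantees $H^i(C^\bullet)\otimes_A B = H^i(C^\bullet)\widehat\otimes_A B$, and the proof concludes.
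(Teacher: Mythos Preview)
Your initial reductions (to an affinoid base, to Čech cohomology, and to the comparison $H^i(C^\bullet)\otimes_A B \to H^i(C^\bullet\widehat\otimes_A B)$) are correct and coincide with the paper's. The gap is at the decisive step, where your two proposed moves both fail.

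First, the factorization through $H^i(C^\bullet\otimes_A B)$ presupposes that $B$ is flat over $A$ as an ordinary ring. Ducros's flatness is a pointwise condition on the morphism of analytic spaces; it does not in general give algebraic flatness of the ring map $A\to B$ (and the paper explicitly warns that $-\widehat\otimes_A B$ ``needs not to be exact in any sense''). Even granting this, you would still owe a comparison between $C^\bullet\otimes_A B$ and its completion $C^\bullet\widehat\otimes_A B$, for which no argument is offered. Second, the suggestion to ``verify stalkwise'' is not a proof strategy here: the $C^p$ and $C^p\widehat\otimes_A B$ are Banach $A$-modules, not coherent sheaves on $S$, and there is no stalkwise criterion that reduces the global cohomological statement to local flatness. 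Your own termwise observation (exactness on finite $\mathcal O(U_\sigma)$-modules) is correct but, as you acknowledge, says nothing about the $A$-linear differentials linking modules over different affinoid rings; you give no mechanism to bridge that gap.

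The paper's route is substantially different and heavier. It isolates the desired commutation as a property $\mathcal P_{\mathcal A}$ of the Banach $\mathcal A$-module $\mathcal B$, proves $\mathcal P_{\mathcal A}$ is $G$-local and stable under composition, and establishes it directly for quasi-smooth morphisms by reducing (via $\Omega_{Y/X}$ and the quasi-étale locus) to relative polydisks and finite étale maps, where one can compute by hand. The general flat case is then obtained by Ducros's analytic Raynaud--Gruson dévissage: locally on $\mathcal M(\mathcal B)$ one produces short exact sequences expressing $\mathcal B$ (and intermediate pieces) in terms of free modules over quasi-smooth $\mathcal A$-algebras of strictly smaller relative dimension, and one climbs the dévissage by induction using a five-lemma/long-exact-sequence argument. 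This machinery is precisely what replaces the naive ``flatness $\Rightarrow$ exactness'' step that works in scheme theory; your proposal contains no substitute for it.
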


We can reduce this statement to the case where $S=\mathcal{M(A)}$ and $S'=\mathcal{M(B)}$ are both affinoid, and then it becomes a theorem on the Čech complex $C^*$ of $\mathcal{F}$ associated to some affinoid covering of $X$. The Čech complex of $X_\mathcal{B}$ is now $C^*\hat{\otimes}_\mathcal{A}\mathcal{B}$ and we just need to show that the natural map $H^i(C^*)\otimes_\mathcal{A} \mathcal{B}\to H^i(C^*\hat{\otimes}_\mathcal{A}\mathcal{B})$ is an isomorphism. In scheme theory, this would be straightforward because tensor product by a flat algebra commutes by definition with cohomology of a complex. On the contrary, in analytic geometry, even if $p:S'=\mathcal{M(B)}\to S=\mathcal{M(A)}$ is flat, the functor $- \hat{\otimes}_\mathcal{A}\mathcal{B}$ needs not to be exact in any sense.

We introduce in $\ref{defipa}$ the property $\mathcal{P_A}$. An $\mathcal{A}$ Banach module $M$ satisfies the property $\mathcal{P_A}$ if the natural arrow $H^i(C^*)\otimes_\mathcal{A} \mathcal{B}\to H^i(C^*\hat{\otimes}_\mathcal{A}\mathcal{B})$ is an isomorphism. In order to show the theorem, it is sufficient to show that $\mathcal{B}$ satisfies the property $\mathcal{P_A}$. 

The first step is to show, by
direct computation involving explicit polydisks, that any quasi-smooth algebra overs $\mathcal{A}$ satisfies $\mathcal{P_A}$. We then show that the $\mathcal{P_A}$ property behaves well with respect to composition, and that this property is local for the $G$-topology on $S$. The last and crucial step is now to use the analytic version of Raynaud-Gruson's theory of dévissages, due to Ducros in section 8.2 of \autocite{ducros2017families}, to handle the general case of an affinoid algebra $\mathcal{B}$, analytically flat over $\mathcal{A}$.

\subsubsection{Proper base change for coherent cohomology}

In 3.3.9 of \autocite{berkovich2012spectral}, Berkovich claimed that we could approach these problems in the non-archimedean analytic setting with the strategy used by Mumford in chapter 5 of \autocite{mumford1974abelian} for schemes. The idea is to show that if we have a proper scheme $X\to \Spec A$ over $A$, $\mathcal{F}$ a coherent sheaf over $X$, flat over $A$, then there exists a complex of \textit{finite} and flat $A$-modules $K^*$ that compute \textit{universally} the cohomology of $\mathcal{F}$ in the following sense: for all $A$-algebra $B$, we have an isomorphism $H^i(K^*\otimes_A B)\to H^i(X_B,\mathcal{F}_B)$. To prove this, we use the Čech complex of $\mathcal{F}$ relative to some open cover, and we use tools from homological algebra. The finiteness of the $K^i$'s is the crucial point of this theorem, because the Čech complex does not usually contain any finite module. Classic proper base change theorems are then a consequence of the existence of this complex.

If we try to adapt this proof to the non-archimedean analytic setting, it seems to fail. Indeed, if we consider a proper analytic space $X\to \mathcal{A} $ over an affinoid algebra $\mathcal{A}$, $\mathcal{F}$ a coherent sheaf over $X$, flat over $\mathcal{A}$, the Čech complex of $\mathcal{F}$ does not consist of type module over $\mathcal{A}$, so if $p:\mathcal{M(B)}\to\mathcal{M(A)}$ is a morphism of affinoid algebras, the tensor products appearing in the Čech complex of $\mathcal{F}_\mathcal{B}$ are \textit{completed} tensor products that prevent us from using the tools from homological algebra.

We however get the following theorem $\ref{propBC}$: 

\begin{theointro}\label{BCintro} Let $\mathcal{A}$ be a $k$-affinoid algebra, $X$ a proper $\mathcal{A}$-analytic space, and $\mathcal{F}$ a coherent sheaf on $X$, that is flat over $\mathcal{M(A)}$. Then there exists a finite complex $K^*$ of finite and projective $\mathcal{A}$-modules such that for any non-archimedean complete field extension $L$ of $k$, and any $L$-affinoid algebra $\mathcal{B}$ with a morphism of analytic spaces $p:\mathcal{M(B)}\to \mathcal{M(A)}$, we have, for all $n\in \mathbb{N}$, an isomorphism $H^n(K^*\otimes_\mathcal{A}\mathcal{B})\to H^n(X_\mathcal{B},\mathcal{F}_\mathcal{B})$.

\end{theointro}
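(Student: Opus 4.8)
The plan is to follow the Čech-complex strategy of Mumford, isolating the single place where completed tensor products obstruct the classical argument and replacing that step by an input from Ducros' flatness theory together with the open mapping theorem. First I would choose a finite covering $\mathcal{U} = (U_i)$ of the compact space $X$ by affinoid domains on which $\mathcal{F}$ is acyclic, and form the associated Čech complex $C^* = C^*(\mathcal{U}, \mathcal{F})$. By Tate--Kiehl acyclicity of coherent sheaves on affinoids this is a bounded complex of Banach $\mathcal{A}$-modules computing $H^*(X, \mathcal{F})$, and each term $C^p = \prod_{|I| = p+1} \mathcal{F}(U_I)$ is $\mathcal{A}$-flat because $\mathcal{F}$ is flat over $\mathcal{M}(A)$ in the sense of Ducros. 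Since affinoid domains pull back to affinoid domains and sections base change through completed tensor products, $C^* \hat{\otimes}_\mathcal{A} \mathcal{B}$ is exactly the Čech complex of $\mathcal{F}_\mathcal{B}$ for the pulled-back covering of $X_\mathcal{B}$, so $H^n(C^* \hat{\otimes}_\mathcal{A} \mathcal{B}) = H^n(X_\mathcal{B}, \mathcal{F}_\mathcal{B})$ for every $\mathcal{B}$. By Kiehl's finiteness theorem for proper morphisms, the modules $H^n(C^*)$ are finite over the noetherian affinoid algebra $\mathcal{A}$.

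Next I would run Mumford's descending induction to build a bounded complex $K^*$ of finite projective (indeed finite free) $\mathcal{A}$-modules, concentrated in the same degrees as $C^*$, together with a morphism of complexes $\phi : K^* \to C^*$ that is a quasi-isomorphism over $\mathcal{A}$. The construction is the classical one: working down from the top degree, one uses finiteness of the cohomology and flatness of the $C^p$ to lift generators and kill relations, noetherianity of $\mathcal{A}$ ensuring that the successive syzygies stay finite so that the complex truncates. All the structure maps land in Banach modules from finite projective sources, so continuity is automatic. Because each $K^i$ is finite projective one has $K^i \hat{\otimes}_\mathcal{A} \mathcal{B} = K^i \otimes_\mathcal{A} \mathcal{B}$, hence the map $K^* \otimes_\mathcal{A} \mathcal{B} \to C^* \hat{\otimes}_\mathcal{A} \mathcal{B}$ is defined and it suffices to show it is a quasi-isomorphism for every $\mathcal{B}$.

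The decisive point is therefore to show that $\phi$ remains a quasi-isomorphism after applying $- \hat{\otimes}_\mathcal{A} \mathcal{B}$ for an arbitrary, not necessarily flat, $\mathcal{B}$. I would argue through the mapping cone $D^* = \mathrm{Cone}(\phi)$: it is a bounded complex of flat Banach $\mathcal{A}$-modules which is acyclic as a complex of abstract modules, and $D^* \hat{\otimes}_\mathcal{A} \mathcal{B} = \mathrm{Cone}(\phi \otimes \mathrm{id})$ since completed tensor product commutes with finite direct sums, so it is enough to prove that $D^* \hat{\otimes}_\mathcal{A} \mathcal{B}$ stays acyclic. Acyclicity forces the images of the differentials to be closed, whence the open mapping theorem for Banach modules over $\mathcal{A}$ shows that $D^*$ is strict and splits into admissible short exact sequences $0 \to Z^i \to D^i \to Z^{i+1} \to 0$; a descending induction starting from the vanishing top $Z$ shows, via the analytic analogue of the $\mathrm{Tor}$ computation, that each $Z^i$ is $\mathcal{A}$-flat.

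The theorem then reduces to the following clean statement, which I expect to be the main obstacle: for an admissible short exact sequence of Banach $\mathcal{A}$-modules whose cokernel is $\mathcal{A}$-flat, the functor $- \hat{\otimes}_\mathcal{A} \mathcal{B}$ preserves exactness for every $\mathcal{B}$. This is the precise completed-tensor-product incarnation of the fact that a sequence with flat cokernel has no $\mathrm{Tor}$, and securing it is exactly the point where the homological pathologies of $\hat{\otimes}$ must be tamed through Ducros' flatness theory rather than through formal homological algebra. Granting it, applying $- \hat{\otimes}_\mathcal{A} \mathcal{B}$ to the admissible short exact sequences above keeps them exact, so $D^* \hat{\otimes}_\mathcal{A} \mathcal{B}$ is acyclic and $K^* \otimes_\mathcal{A} \mathcal{B} \to C^* \hat{\otimes}_\mathcal{A} \mathcal{B}$ is a quasi-isomorphism, which yields the desired isomorphisms $H^n(K^* \otimes_\mathcal{A} \mathcal{B}) \xrightarrow{\sim} H^n(X_\mathcal{B}, \mathcal{F}_\mathcal{B})$.
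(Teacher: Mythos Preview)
Your setup through the construction of $K^*$ and the reduction to showing that $\phi\hat\otimes_{\mathcal A}\mathcal B$ remains a quasi-isomorphism matches the paper's. The gap is in your final ``clean statement'': that an admissible short exact sequence of Banach $\mathcal A$-modules with $\mathcal A$-flat cokernel is preserved by $-\hat\otimes_{\mathcal A}\mathcal B$ for \emph{every} $\mathcal B$. This is exactly the sort of completed-Tor vanishing the introduction of the paper flags as unavailable, and your appeal to Ducros' flatness theory does not close it. That theory concerns coherent sheaves on analytic spaces; your modules $Z^i$ are non-finite kernels inside infinite-dimensional Banach modules with no coherent or geometric incarnation, so there is nothing in \autocite{ducros2017families} to invoke. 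Algebraic flatness of $Z^{i+1}$ controls $-\otimes_{\mathcal A}\mathcal B$ via the Tor long exact sequence, but says nothing about what happens after completion, and the paper neither proves nor uses any such statement.

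The paper avoids this entirely by shifting the analysis from the complex to the morphism $p:\mathcal M(\mathcal B)\to\mathcal M(\mathcal A)$. It isolates the property $\mathcal R_{\mathcal A}$ (``$\phi\hat\otimes\mathcal B$ stays a quasi-isomorphism''), shows it is $G$-local on the source and stable under composition, and verifies it in three atomic cases: $\mathcal B=\mathcal A_L$ for a complete extension $L/k$ (Gruson's theorem that $\hat\otimes_k L$ preserves admissible exact sequences); $p$ flat (the flat base change theorem of Section~2, whose proof is where Ducros' d\'evissages are actually used, applied to the coherent sheaf $\mathcal O_{\mathcal B}$); and $p$ finite (where $\hat\otimes=\otimes$ and Mumford's homological lemma applies verbatim). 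The general $p$ is then handled by Ducros' local factorization of morphisms (Cor.~4.7 of \autocite{ducros_2007}): $G$-locally, $p$ is a finite map onto an affinoid domain of a smooth $\mathcal A$-space, hence a composite of the cases above. So Ducros' theory enters only through the geometry of $\mathcal M(\mathcal B)$, where coherence is available, never through the non-coherent cycles of a mapping cone.
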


The idea of the proof is to work with the Čech complex of $\mathcal{F}$, and to note that the theorem is true in the following cases:

\begin{enumerate}
    \item if $\mathcal{B}=\mathcal{A}_L$ with $L$ a non-archimedean complete extension of $k$ because $\hat{\otimes}_k L$ transforms exact admissible sequences into exact admissible sequences by Gruson's theory in \autocite{gruson1966theorie},
    \item if $\mathcal{B}$ is $k$-affinoid, and $p$ is analytically flat by theorem $\ref{flatintro}$, 
    \item if $p$ is a finite morphism between $k$-affinoid spaces by the algebraic theory and the construction by Mumford in Section 5 of \autocite{mumford1974abelian}, because in this case, \textit{completed} tensor product and usual tensor product are the same.  
\end{enumerate}

Then, we can use Ducros's work on the relative dimension in \autocite{ducros_2007}, and in particular corollary 4.7 which gives a nice $G$-local factorization of some morphisms and allows to conclude.

Theorem $\ref{BCintro}$ allows us to derived classical versions of proper base change theorems in Berkovich geometry. In particular, we obtain the upper semi-continuity of the dimension of the cohomology group of the fiber for the Zariski topology, and the fact that the Euler characteristic is locally constant.

We also find back a classic statement as a corollary in theorem $\ref{finver}$:

\begin{theointro}Let $f:X\to S$ be a proper morphism of $k$-analytic spaces with $S$ connected and reduced, and $\mathcal{F}$ a coherent sheaf on $X$ that is flat on $S$. Then, for all $p\in \mathbb{N}$, there is an equivalence between:
\begin{enumerate}
    \item the function $s\in S\mapsto \dim_{\mathcal{H}(s)} H^p(X_s,\mathcal{F}_s)$ is constant ;
    \item the sheaf $R^p f_*(\mathcal{F})$ is locally free on $S$, and for all $s\in S$, the natural map $R^p f_*(\mathcal{F})\otimes_{\mathcal{O}_S} \mathcal{H}(s)\to H^p(X_s,\mathcal{F}_s)$ is an isomorphism.
\end{enumerate}
If these equivalent conditions are fulfilled, then the natural map $R^{p-1} f_*(\mathcal{F})\otimes_{\mathcal{O}_S} \mathcal{H}(s)\to H^{p-1}(X_s,\mathcal{F}_s)$ is an isomorphism for all $s\in S$.

\end{theointro}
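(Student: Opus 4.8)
The plan is to reduce the statement to a purely algebraic incarnation of Grauert's theorem, applied to the universal complex produced by Theorem~\ref{BCintro}, and then to transfer the conclusion back to the analytic side. First I would note that both conditions (1) and (2) may be tested after restriction to affinoid domains of $S$: local freeness of $R^pf_*\mathcal{F}$ and the bijectivity of the base change maps are $G$-local on $S$, while connectedness serves only to promote the locally constant rank appearing in (2) to the globally constant function of (1). So I may assume $S=\mathcal{M}(\mathcal{A})$ with $\mathcal{A}$ a $k$-affinoid algebra, which is reduced because $S$ is, and I fix the finite complex $K^\bullet$ of finite projective $\mathcal{A}$-modules furnished by Theorem~\ref{BCintro}. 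Taking $\mathcal{B}=\mathcal{A}$ and using that on an affinoid base the coherent sheaf $R^pf_*\mathcal{F}$ corresponds, through Kiehl's equivalence, to the finite module $H^p(X,\mathcal{F})=H^p(K^\bullet)$, I identify $R^pf_*\mathcal{F}$ with $H^p(K^\bullet)$; in particular $R^pf_*\mathcal{F}$ is locally free iff $H^p(K^\bullet)$ is projective over $\mathcal{A}$. Taking $\mathcal{B}=\mathcal{H}(s)$ for $s\in S$, which is admissible since $\mathcal{H}(s)$ is an $\mathcal{H}(s)$-affinoid algebra with its canonical map to $\mathcal{M}(\mathcal{A})$, gives $H^p(X_s,\mathcal{F}_s)\cong H^p(K^\bullet\otimes_\mathcal{A}\mathcal{H}(s))$, and I would check that Theorem~\ref{BCintro} is natural enough in $\mathcal{B}$ that the analytic base change map corresponds to the tautological map $H^p(K^\bullet)\otimes_\mathcal{A}\mathcal{H}(s)\to H^p(K^\bullet\otimes_\mathcal{A}\mathcal{H}(s))$.

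Next I set $\delta^p(\mathfrak{p})=\dim_{\kappa(\mathfrak{p})}H^p(K^\bullet\otimes_\mathcal{A}\kappa(\mathfrak{p}))$ on $\Spec\mathcal{A}$. For $s\in S$ with kernel $\mathfrak{p}_s$, the field $\mathcal{H}(s)$ is an extension of $\kappa(\mathfrak{p}_s)=\mathrm{Frac}(\mathcal{A}/\mathfrak{p}_s)$, so flatness of field extensions gives $H^p(K^\bullet\otimes_\mathcal{A}\mathcal{H}(s))=H^p(K^\bullet\otimes_\mathcal{A}\kappa(\mathfrak{p}_s))\otimes_{\kappa(\mathfrak{p}_s)}\mathcal{H}(s)$, whence $\dim_{\mathcal{H}(s)}H^p(X_s,\mathcal{F}_s)=\delta^p(\mathfrak{p}_s)$. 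Rigid points of $S$ realise every maximal ideal of $\mathcal{A}$ as some $\mathfrak{p}_s$, so hypothesis (1) forces $\delta^p$ to equal a single constant $c$ on the closed points of $\Spec\mathcal{A}$. Since $\mathcal{A}$ is Jacobson and $\delta^p$ is upper semicontinuous, the closed set $\{\delta^p\ge c+1\}$ would otherwise contain a closed point, which is impossible; working on each irreducible component (where $\delta^p$ is bounded below by its generic value and that value is attained at the dense closed points, hence equals $c$) then yields $\delta^p\equiv c$ on all of $\Spec\mathcal{A}$.

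Now I apply the classical cohomology-and-base-change machinery (as in \autocite{mumford1974abelian}) to the bounded complex $K^\bullet$ of finite projective modules over the reduced noetherian ring $\mathcal{A}$: constancy of $\delta^p$ implies, locally on $\Spec\mathcal{A}$, that the coboundaries, the cocycles and $H^p(K^\bullet)$ are locally free and that their formation commutes with arbitrary base change, so that $H^p(K^\bullet)\otimes_\mathcal{A}M\to H^p(K^\bullet\otimes_\mathcal{A}M)$ is an isomorphism for every $\mathcal{A}$-module $M$. Specialising to $M=\mathcal{H}(s)$ yields condition (2), and the same universality, together with the standard propagation of base change one degree down (once $R^pf_*\mathcal{F}$ is locally free and base change holds in degree $p$, it holds in degree $p-1$), gives the final isomorphism in degree $p-1$. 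The converse (2)$\Rightarrow$(1) is immediate: if $R^pf_*\mathcal{F}$ is locally free and base change holds, then $\dim_{\mathcal{H}(s)}H^p(X_s,\mathcal{F}_s)$ is the rank of $R^pf_*\mathcal{F}$ at $s$, which is locally constant and hence constant on the connected space $S$.

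The genuinely delicate points are not the homological algebra, which is imported wholesale once $K^\bullet$ is available, but the two interfaces with Berkovich geometry. First, one must verify that the isomorphisms of Theorem~\ref{BCintro} are natural in $\mathcal{B}$, so that the analytic base change map $R^pf_*\mathcal{F}\otimes_{\mathcal{O}_S}\mathcal{H}(s)\to H^p(X_s,\mathcal{F}_s)$ really is the algebraic map attached to $K^\bullet$; without this compatibility the equivalence would concern the wrong morphism. Second, the transfer of the constancy hypothesis rests on the insensitivity of $\dim_{\mathcal{H}(s)}H^p(X_s,\mathcal{F}_s)$ to the passage from $\mathcal{H}(s)$ to the honest residue field $\kappa(\mathfrak{p}_s)$, combined with the Jacobson property of affinoid algebras and the density of rigid points; this is exactly where the reducedness of $S$ is used. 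I expect the naturality check of Theorem~\ref{BCintro} to be the main obstacle, since everything downstream depends on matching the analytic and algebraic base change maps.
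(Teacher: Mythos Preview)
Your proposal is correct and follows the paper's overall strategy: reduce to an affinoid base, invoke Theorem~\ref{BCintro} to obtain the universal complex $K^\bullet$ of finite projective $\mathcal{A}$-modules, and then run Mumford's Grauert-type argument. The paper's own proof is essentially a one-line reference to \autocite{mumford1974abelian}, preceded by two preparatory lemmas that establish analytic versions of Mumford's Lemma~1 (constant fibre rank on a reduced analytic space implies locally free) and Lemma~2 (splitting of a map of locally free sheaves when the fibre rank of the image is locally constant); these lemmas are proved directly on $\mathcal{M}(\mathcal{A})$, using faithful flatness of $\mathcal{O}_{S,s}$ over the algebraic local ring to descend projectivity. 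You take a slightly different tactical route: rather than re-proving those lemmas on the analytic side, you transfer the constancy of $s\mapsto\dim_{\mathcal{H}(s)}H^p(X_s,\mathcal{F}_s)$ to $\Spec\mathcal{A}$ via the Jacobson property and density of rigid points, and then apply the classical scheme-theoretic statements verbatim. This buys you the ability to quote Mumford unchanged at the cost of the transfer step; the paper's route keeps everything analytic at the cost of redoing the two lemmas. Your naturality concern is easily settled: since $K^\bullet$ comes with a fixed quasi-isomorphism $\varphi\colon K^\bullet\to C^\bullet$ to the \v Cech complex, the square with rows $H^p(K^\bullet)\otimes\mathcal{H}(s)\xrightarrow{\sim}H^p(C^\bullet)\otimes\mathcal{H}(s)$ and $H^p(K^\bullet\otimes\mathcal{H}(s))\xrightarrow{\sim}H^p(C^\bullet\hat\otimes\mathcal{H}(s))$ commutes, identifying the algebraic and analytic base change maps. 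One small correction: reducedness is not used in the transfer of constancy (that only needs Jacobson and upper semicontinuity) but in the Grauert step itself, exactly as in Mumford.
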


\subsection{Acknowledgments} I would like to express my sincere gratitude to Antoine Ducros, without whom this work would not have been possible. I would also like to thank Christian Lehn for his welcome in Chemnitz University.

This article was completed when the author was supported through the SMWK research grant SAXAG.


\section{Flat base change}

The main goal of this section is to show a flat base change theorem for coherent modules in Berkovich geometry. Our strategy will be to use Ducros's theory of dévissages in Berkovich geometry (see chapter 8 of \autocite{ducros2017families}) to reduce the theorem to two elementary cases, the case of a quasi-smooth base change, and the case of a $G$-covering. The case of a $G$-covering follows from Kiehl's theorem on direct images, and the case of quasi-smooth morphisms will be handled by direct computation involving explicit polydisks.

We first give a reminder of Ducros theory of dévissages and give the explicit version that we will use here. The following theorem is just a version of the theorem 8.3.4 of \autocite{ducros2017families} where we shrinked every quasi-smooth space.

\begin{prop}[Ducros] \label{deviss}Let $p:Y\to X$ be a morphism between good $k$-analytic spaces, let $\mathcal{F}$ be a flat coherent sheaf on $Y$, let $y$ be a point of $\mathrm{supp}~Y$, let $x$ be its image in $X$ and let $n$ be the relative dimension of $p$ at $x$. Then there exist $r\leq n$, a decreasing sequence of non-negative integers $n_1>n_2>...>n_r$ and a list of data $(V,\{T_i,\pi_i,t_i,L_i, P_i\}_{i\in \{1,...r\}})$, where : 
\begin{enumerate}
    \item $V$ is an affinoid neighboorhood of $y\in Y$;
    \item $T_i=\mathcal{M}(C_i)$ is a $k$-affinoid domain of a smooth $X$-space of pure relative dimension $n_i$ and $t_i$ is a point of $T_i$ lying over $x$;
    \item for every $i$, $L_i$ and $P_i$ are finite $C_i$-modules, and $L_i$ is free over $C_i$;
    \item $t_i\in \mathrm{supp}~(P_i)$ if $i<r$ and $P_r=0$;
    \item $\pi_1$ is a finite $X$-map from $\mathrm{supp}~ (\mathcal{F}_V)$ to $T_1$ such that $\pi_1^{-1}(t_1)=\{y\}$ set-theoretically;
     \item $\pi_i$ is a finite $X$-map from $\mathrm{supp}~ (P_{i-1})$ to $T_i$ such that $\pi_1^{-1}(t_i)=\{t_{i-1}\}$ set-theoretically;
     \item There exist an exact sequence of finite $C_1$-modules $0\to L_1\to H^0(V,\mathcal{F})\to P_1\to 0$ ; 
     \item for any $i\in \{2,...,r\}$, there is an exact sequence of finite $C_i$-modules $0\to L_i\to P_{i-1}\to P_i\to 0$.
\end{enumerate}
\end{prop}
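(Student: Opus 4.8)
The plan is to run Ducros' dévissage once, in the form of Theorem 8.3.4 of \autocite{ducros2017families}, and then to replace every quasi-smooth space produced by that theorem with a small enough affinoid domain so that the output takes the rigid affinoid shape claimed above. Applying 8.3.4 to $p$, $\mathcal{F}$ and $y$ already yields the integer $r\leq n$, the decreasing sequence $n_1>n_2>\dots>n_r$, an affinoid neighbourhood $V$ of $y$, smooth $X$-spaces $\widetilde{T}_i$ of pure relative dimension $n_i$ with distinguished points $t_i$ over $x$, finite $X$-maps $\pi_i$ together with the set-theoretic fibre conditions $\pi_1^{-1}(t_1)=\{y\}$ and $\pi_i^{-1}(t_i)=\{t_{i-1}\}$, and the two families of exact sequences realising the filtration, where at this stage each $L_i$ is merely a locally free coherent sheaf on $\widetilde{T}_i$.

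The substance of the statement is then the passage from the $\widetilde{T}_i$ to affinoid domains. Around each $t_i$ I would choose an affinoid domain $T_i=\mathcal{M}(C_i)$ of $\widetilde{T}_i$ that is small enough for the locally free sheaf $L_i$ to trivialise, hence to become free over $C_i$; this is legitimate because $L_i$ is free on some open neighbourhood of $t_i$ and affinoid domains form a basis of such neighbourhoods. Taking global sections over $T_i$ turns $L_i$ and $P_i$ into finite $C_i$-modules, with $L_i$ free; the exactness of the sequences $0\to L_1\to H^0(V,\mathcal{F})\to P_1\to 0$ and $0\to L_i\to P_{i-1}\to P_i\to 0$ is preserved because the global sections functor is exact on coherent sheaves over an affinoid domain, by Tate's acyclicity theorem. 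Finiteness of $\pi_i$ is stable under the base change along $T_i\hookrightarrow\widetilde{T}_i$, and the single-point fibre conditions, being part of the output of 8.3.4, are inherited by passing to affinoid subdomains around the $t_i$.

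The one point requiring care is that these shrinkings cannot be chosen independently at the various levels: the map $\pi_i$ is finite on $\mathrm{supp}(P_{i-1})\subseteq T_{i-1}$, so shrinking $T_{i-1}$ shrinks the source of $\pi_i$ while shrinking $T_i$ shrinks its target, and the two constraints interact through the filtration. I would therefore fix the domains by a finite induction on $i$, at each step shrinking $T_i$ (and, if necessary, the previously chosen $V$ and $T_j$ for $j<i$) just enough to simultaneously guarantee freeness of $L_i$, finiteness of $\pi_i$ onto its image, and the isolation of $t_{i-1}$ in the fibre over $t_i$; since a finite morphism is closed with finite fibres, a single point of a fibre can always be separated from the others by shrinking the target, so each step is realisable and the process terminates after $r$ steps. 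Once all compatible shrinkings are fixed, the resulting data $(V,\{T_i,\pi_i,t_i,L_i,P_i\})$ satisfy conditions (1)--(8); organising this compatible shrinking is the main obstacle one has to handle carefully.
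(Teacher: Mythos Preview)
Your approach matches the paper's: invoke Ducros' dévissage and then shrink everything to affinoid domains. One point to tighten: the paper separates the existence of a dévissage (Theorem 8.2.3 of \autocite{ducros2017families}) from the role of the flatness hypothesis, which enters through Theorem 8.3.4 only to guarantee that the arrows $\mathcal{L}_1\to\pi_{1*}\mathcal{F}_V$ and $\mathcal{L}_i\to\pi_{i*}\mathcal{P}_{i-1}$ are injective \emph{at the point} $t_i$, not that the sheaf sequences are exact on all of $\widetilde{T}_i$. Thus Tate's acyclicity does not yet apply; the shrinking step (for which the paper cites Theorem 2.4.9 of loc.\ cit.) is needed precisely to pass from injectivity at $t_i$ to exactness on global sections, and your write-up should make that explicit rather than assuming the exact sequences come for free out of 8.3.4.
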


\begin{proof}[Proof]

Let $(V',\{T_i,\pi_i,t_i,\mathcal{L}_i,\mathcal{P}_i\}_{i\in \{1,...,r\}})$ be an $X$-dévissage of $\mathcal{F}$ at $y$ (such a dévissage exist by theorem 8.2.3 of \autocite{ducros2017families}). Then the arrow $\mathcal{L}_1\to {\pi_1}_* \mathcal{F}_V$ is injective at $t_1$ and the arrow $\mathcal{L}_i\to {\pi_i}_* \mathcal{P}_{i-1}$ is injective at $t_i$ for every $i\geq 2$ by theorem 8.3.4 of \autocite{ducros2017families} and flatness of $\mathcal{F}$.

By definition, and by theorem 2.4.9 of \autocite{ducros2017families}, we can shrink $V'$ and all the $T_i$ to reduce to the case where if $L_i$ (resp. $P_i$) is the $O_{T_i}(T_i)$-module associated with $\mathcal{L}_i$ (resp. $\mathcal{P}_i$) then the arrows $L_i\to P_{i-1}$ and $L_1\to H^0(V,\mathcal{F})$ are injective and the sequences $0\to L_i\to P_{i-1}\to P_i\to 0$ for all $i>1$ and $0\to L_1\to H^0(V,\mathcal{F})\to P_1\to 0$ are exact. \qedhere

\end{proof}

\begin{remaa}
By the reverse implication in 8.3.4 of \autocite{ducros2017families}, the coherent module associated to $P_i$ over $T_i$ is flat over $\mathcal{M(A)}$ because we have a dévissage of it $(V,\{T_i,\pi_i,t_i,L_i, P_i\}_{i\in \{2,...r\}})$ that is just the restriction of the dévissage of $\mathcal{F}$ 
\end{remaa}


The main theorem of this section is the following: 

\begin{theoa} \label{bigtheoflat}Suppose $$ \xymatrix{
    X' \ar[r]^{f'} \ar[d]^{p'}  & S' \ar[d]^{p} \\
    X \ar[r]^{f} & S
  }$$
is a cartesian diagram of $k$-analytic spaces with $f$ proper and $p$ flat.
Let $\mathcal{F}$ be a coherent sheaf on $X$. Then for all $i\geq 0$ the natural morphism of coherent $\mathcal{O}_{S'}$-modules $p^*(R^i f_* \mathcal{F})\to R^i f'_* (p'^* \mathcal{F})$ is an isomorphism.
\end{theoa}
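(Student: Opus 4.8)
The plan is to reduce the statement to a comparison of Čech complexes over an affinoid base, and then to prove that comparison by dévissage, following the strategy announced in the introduction.

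First I would note that by Kiehl's finiteness theorem applied to the proper morphisms $f$ and $f'$, both $p^*(R^i f_*\mathcal{F})$ and $R^i f'_*(p'^*\mathcal{F})$ are coherent $\mathcal{O}_{S'}$-modules, so that the natural transformation $G$ is a morphism of coherent sheaves and being an isomorphism may be tested $G$-locally on $S'$. Choosing affinoid domains $\mathcal{M}(\mathcal{B})\subseteq S'$ and $\mathcal{M}(\mathcal{A})\subseteq S$ with $p(\mathcal{M}(\mathcal{B}))\subseteq \mathcal{M}(\mathcal{A})$, I reduce to the case where $S=\mathcal{M}(\mathcal{A})$ and $S'=\mathcal{M}(\mathcal{B})$ are both affinoid and $\mathcal{B}$ is flat over $\mathcal{A}$. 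Fixing a finite affinoid covering of the proper space $X$ and letting $C^*$ be the associated Čech complex of $\mathcal{F}$, one has $H^i(C^*)=H^i(X,\mathcal{F})$, the finite $\mathcal{A}$-module underlying $R^if_*\mathcal{F}$, while the pulled-back covering computes $H^i(X_\mathcal{B},\mathcal{F}_\mathcal{B})=H^i(C^*\hat{\otimes}_\mathcal{A}\mathcal{B})$, and under these identifications $G$ becomes exactly the natural map $H^i(C^*)\otimes_\mathcal{A}\mathcal{B}\to H^i(C^*\hat{\otimes}_\mathcal{A}\mathcal{B})$. The theorem is therefore equivalent to the assertion that $\mathcal{B}$ satisfies property $\mathcal{P_A}$ (see \ref{defipa}).

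Next I would establish the formal behaviour of $\mathcal{P_A}$ together with the base cases. The property is stable under finite direct sums (so finite free $\mathcal{A}$-modules satisfy it), it is transitive with respect to composition of morphisms of affinoid algebras, and whether a module satisfies it can be checked on a $G$-covering of $\mathcal{M}(\mathcal{A})$. When $p$ is a finite morphism of $k$-affinoid spaces the completed and the ordinary tensor products agree and are exact, which makes $\mathcal{P_A}$ hold and, more importantly, makes it propagate through short exact sequences of finite modules carried by such a map. The genuinely computational input is the quasi-smooth case: for a relative polydisk $\mathcal{B}=\mathcal{A}\langle r^{-1}T_1,\dots,r^{-1}T_n\rangle$ one computes directly that $C^*\hat{\otimes}_\mathcal{A}\mathcal{B}$ is the complex of convergent power series with coefficients in $C^*$, with differential acting coefficientwise, so that its cohomology is $H^i(C^*)\langle r^{-1}T\rangle = H^i(C^*)\otimes_\mathcal{A}\mathcal{B}$; combining this with the stability properties yields $\mathcal{P_A}$ for an arbitrary quasi-smooth $\mathcal{A}$-algebra.

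The crux, and the main obstacle, is to pass from these special cases to a general analytically flat $\mathcal{B}$, and this is where I would use the dévissage of \ref{deviss}. After shrinking $S'$ to an affinoid neighbourhood $V$ of a given point (legitimate since the conclusion is local on $S'$), I would apply the dévissage to the flat coherent sheaf $\mathcal{O}_{S'}$ along $p$, which exhibits $\mathcal{B}=H^0(V,\mathcal{O}_{S'})$ as the top of a finite filtration whose successive quotients, through the exact sequences $0\to L_1\to \mathcal{B}\to P_1\to 0$ and $0\to L_i\to P_{i-1}\to P_i\to 0$, are finite free modules $L_i$ over the quasi-smooth algebras $C_i$, transported by the finite maps $\pi_i$. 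Each $L_i$ satisfies $\mathcal{P_A}$ by the quasi-smooth case together with stability under finite maps and direct sums; starting from $P_r=0$ and descending, I would propagate $\mathcal{P_A}$ up the filtration by comparing, for each short exact sequence, the long exact sequence for $H^\bullet(C^*)\otimes_\mathcal{A}(-)$ with the one for $H^\bullet\!\big(C^*\hat{\otimes}_\mathcal{A}(-)\big)$ and applying the five lemma. The delicate point, which concentrates the whole difference with the scheme-theoretic argument, is to guarantee that $C^*\hat{\otimes}_\mathcal{A}(-)$ keeps the dévissage sequences exact: completed tensor product is only right exact in general, and it is precisely the finiteness of the modules over the $C_i$ and the strictness of the differentials supplied by Kiehl's theorem that I expect to restore the missing exactness and let the induction close.
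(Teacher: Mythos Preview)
Your overall architecture matches the paper's: reduce to the affinoid statement, reformulate as property $\mathcal{P_A}$ for $\mathcal{B}$, verify $\mathcal{P_A}$ for quasi-smooth algebras via an explicit polydisk computation plus formal stability properties, and then run the dévissage of \ref{deviss} on $\mathcal{O}_{S'}$ to climb from $P_r=0$ up to $\mathcal{B}$. That is exactly what the paper does.

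The gap is in your last paragraph, at the step you yourself flag as delicate. You propose to restore exactness of $C^j\hat{\otimes}_\mathcal{A}(-)$ on the dévissage sequences using ``finiteness of the modules over the $C_i$'' and ``strictness of the differentials supplied by Kiehl's theorem''. Neither of these is the right lever. What actually makes the induction close is that each $P_i$ is \emph{flat over $\mathcal{A}$}: this is not automatic from the dévissage but is an extra output of Ducros's theorem 8.3.4 when the input sheaf is flat (see the remark following \ref{deviss}). The paper then invokes Ducros's theorem 4.5.7 on the cartesian square $T\times_S Y_i$ to obtain, for each $j$, a short exact sequence
\[
0\to C^j\hat{\otimes}_\mathcal{A} L_i \to C^j\hat{\otimes}_\mathcal{A} P_{i-1}\to C^j\hat{\otimes}_\mathcal{A} P_i\to 0,
\]
and the same flatness of $P_i$ over $\mathcal{A}$ gives $\mathrm{Tor}_1^\mathcal{A}(H^n(C^*),P_i)=0$, so that the top row in the comparison diagram is a genuine short exact sequence as well. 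Without the flatness of the $P_i$ you have no reason for either row to be exact, and Kiehl's strictness of differentials is not relevant here.

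A second, smaller issue: the comparison is not between two long exact sequences, so a one-shot five lemma does not apply. The top row is a short exact sequence for each $n$, while the bottom row sits inside a long exact sequence with connecting maps $H^n(C^*\hat{\otimes}_\mathcal{A} P_i)\to H^{n+1}(C^*\hat{\otimes}_\mathcal{A} L_i)$. The paper first uses the inductive hypothesis on $P_i$ to force surjectivity onto $H^n(C^*\hat{\otimes}_\mathcal{A} P_i)$, hence the connecting maps vanish, and only then does the five-lemma style chase go through. You should make this explicit rather than invoking the five lemma directly. (Also, a minor slip: $\mathcal{P_A}$ is $G$-local on $\mathcal{M}(\mathcal{B})$, not on $\mathcal{M}(\mathcal{A})$.)
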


The rest of this section is devoted to the proof of this theorem.

\begin{remaa} \label{affbc}
By Kiehl theorem on coherence of direct images, theorem 3.3 in \autocite{kiehl1967endlichkeitssatz}, the theorem holds for every embedding of analytic domain $p:S'\to S$.
\end{remaa}

Since this assertion is local on $S'$ for the $G$-topology, we can assume than $S'$ is affinoid, and the theorem is just a consequence of the following theorem: 

\begin{theoa} \label{aff}Suppose $$ \xymatrix{
    X' \ar[r]^{f'} \ar[d]^{p'}  & S' \ar[d]^{p} \\
    X \ar[r]^{f} & S
  }$$

is a cartesian diagram of $k$-analytic spaces with $f$ proper and $p:S':=\mathcal{M(B)}\to S:=\mathcal{M(A)}$ a flat morphism between affinoid spaces.
Let $\mathcal{F}$ be a coherent sheaf on $X$. Then for all $i\geq 0$ the natural morphism of $B$-modules $H^i(X,\mathcal{F})\otimes_\mathcal{A} \mathcal{B}\to H^i(X_\mathcal{B},\mathcal{F}_B)$ is an isomorphism.

\end{theoa}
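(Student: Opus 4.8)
The plan is to reduce Theorem~\ref{aff} to a purely algebraic statement about the Čech complex of $\mathcal{F}$, and then to attack that statement via the dévissage machinery of Proposition~\ref{deviss}. Since $X$ is proper over the affinoid $S=\mathcal{M(A)}$, I would fix a finite affinoid cover $\mathcal{U}=(U_j)$ of $X$ and let $C^\bullet$ denote the associated Čech complex computing $H^i(X,\mathcal{F})$; because $\mathcal{F}$ is coherent, each $C^i$ is an $\mathcal{A}$-Banach module. Crucially, since the square is cartesian, the cover $\mathcal{U}_\mathcal{B}:=(U_j\times_S S')$ is an affinoid cover of $X_\mathcal{B}$, and its Čech complex for $\mathcal{F}_\mathcal{B}$ is exactly $C^\bullet\hat{\otimes}_\mathcal{A}\mathcal{B}$. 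Thus the natural map in the statement is identified with $H^i(C^\bullet)\otimes_\mathcal{A}\mathcal{B}\to H^i(C^\bullet\hat{\otimes}_\mathcal{A}\mathcal{B})$, which is precisely the assertion that $\mathcal{B}$ satisfies the property $\mathcal{P_A}$ of~\ref{defipa}. So the theorem amounts to proving that any $\mathcal{B}$ which is analytically flat over $\mathcal{A}$ satisfies $\mathcal{P_A}$.

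With this reformulation in hand, I would proceed in the order announced in the overview. First, I would establish the base case by direct computation: for a quasi-smooth $\mathcal{A}$-algebra, which $G$-locally looks like a relative polydisk $\mathcal{A}\langle T_1,\dots,T_n\rangle$, one can write down the completed tensor product explicitly and check by hand that $-\hat{\otimes}_\mathcal{A}\mathcal{B}$ commutes with taking cohomology of $C^\bullet$, so quasi-smooth algebras satisfy $\mathcal{P_A}$. Second, I would record the formal permanence properties of $\mathcal{P_A}$: that it is stable under composition of base changes and that it is $G$-local on $S$ (the latter using Remark~\ref{affbc} and Kiehl's theorem, which already handles embeddings of analytic domains). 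These two steps are comparatively soft. The decisive step is the general case, where I would apply Proposition~\ref{deviss} to the flat coherent sheaf on the relevant space: the dévissage produces, $G$-locally, finite maps onto domains of smooth $X$-spaces together with the short exact sequences $0\to L_i\to P_{i-1}\to P_i\to 0$ and $0\to L_1\to H^0(V,\mathcal{F})\to P_1\to 0$, with the $L_i$ free.

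The strategy is then to run an induction on the length $r$ of the dévissage. Since each $L_i$ is free over $C_i$ and $T_i$ is a domain of a smooth (hence quasi-smooth) $X$-space, the free modules $L_i$ inherit the property $\mathcal{P_A}$ from the base case together with the permanence properties. The exact sequences let me propagate $\mathcal{P_A}$ from the terms $L_i$ and $P_i$ to $P_{i-1}$, and finally to $H^0(V,\mathcal{F})$: here one compares the long exact cohomology sequence of $C^\bullet$ tensored with $\mathcal{B}$ against the cohomology of $C^\bullet\hat{\otimes}_\mathcal{A}\mathcal{B}$, using the five lemma once the two flanking terms are known to satisfy $\mathcal{P_A}$. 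The remark following Proposition~\ref{deviss}, which guarantees that each $P_i$ is again flat over $\mathcal{M(A)}$, is exactly what makes the induction hypothesis applicable at each stage. The main obstacle, and the heart of the matter, is that completed tensor product is not exact, so one cannot naively tensor the short exact sequences and split the cohomology; the role of the dévissage is precisely to replace a single uncontrolled sheaf by finite modules over smooth bases where the completed tensor product \emph{is} controllable, and the delicate point is to verify that the maps in the dévissage sequences remain admissible after base change so that the associated long exact sequences behave well under $\hat{\otimes}_\mathcal{A}\mathcal{B}$.
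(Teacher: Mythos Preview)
Your outline matches the paper's proof: reduce to the property $\mathcal{P_A}$ for $\mathcal{B}$, establish it for quasi-smooth algebras, record the permanence properties (Lemmas~\ref{redaf} and~\ref{redQ}), and then run the d\'evissage induction, applied to $\mathcal{O}_{\mathcal{M(B)}}$ over $\mathcal{M(A)}$, via Lemmas~\ref{induc} and~\ref{caspur}. The one point to sharpen is the quasi-smooth base case: a quasi-smooth $\mathcal{A}$-algebra is not $G$-locally a relative polydisk but only $G$-locally quasi-\'etale over one, so after the explicit polydisk computation you still need the further reduction through \'etale to finite \'etale morphisms carried out in Lemma~\ref{quasilisse}, where completed and ordinary tensor products finally coincide; likewise, the exactness of $0\to C^i\hat{\otimes}_\mathcal{A}L\to C^i\hat{\otimes}_\mathcal{A}F\to C^i\hat{\otimes}_\mathcal{A}N\to 0$ needed for your five-lemma step is supplied in the paper by Ducros's flatness theorem 4.5.7 (see Lemma~\ref{induc}).
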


\begin{remaa} The space $X$ is compact so if we take a $G$-covering of $X$ by a finite number of affinoid domains, the $i$-th cohomology module $ H^i(X,\mathcal{F})$ is given by $H^i(C^*)$ the $i$-th cohomology module of the Čech complex associated to $\mathcal{F}$, where the latter is denoted by $(C^*)$.  
\end{remaa}

\begin{defia}\label{defipa}Let $\mathcal{A}$ be a $k$-affinoid algebra, and $M$ be a Banach module over $\mathcal{A}$. We say that $M$ satisfies the property $\mathcal{P}_\mathcal{A}$ if for every $i\geq 0$ and all coherent sheaf $\mathcal{F}$ on a proper $\mathcal{A}$-space $X$ provided with a finite $G$-covering the natural arrow $H^i(C^*)\hat{\otimes}_{\mathcal{A}} M\to H^i(C^*\hat{\otimes}_\mathcal{A} M)$ is an isomorphism where $(C^*)$ is the Čech complex of $\mathcal{F}$.
\end{defia}

\begin{remaa}
By properness, the $H^i(C^*)$ are finite $\mathcal{A}$-modules, so we have the equality $H^i(C^*)\hat{\otimes}_{\mathcal{A}} M=H^i(C^*){\otimes}_{\mathcal{A}} M$.
\end{remaa}

\begin{defia} In the setting of theorem $\ref{bigtheoflat}$, we say that a morphism $p:S'\to S$ satisfies property $\mathcal{Q}_S$ if for all proper morphism $f:X\to S$ and $\mathcal{F}$ coherent sheaf on $X$, the conclusion of theorem $\ref{bigtheoflat}$ are satisfied.

\end{defia}
\begin{remaa} For a morphism of $k$-affinoid spaces $p:S'\to S$ the following properties are equivalent : $\mathcal{O}(S')$ satisfies $\mathcal{P}_{\mathcal{O}(S)}$ and the morphism $p$ satisfies $\mathcal{Q}_S$. Nevertheless, the property $\mathcal{Q}$ makes sense for non-affinoid analytic spaces.
\end{remaa}
\begin{exema}
Let $\mathcal{M(A)}=\bigcup_{j\in J} \mathcal{M}(\mathcal{A}_j)$ be a covering of an affinoid space by a finite number of affinoid domains. Let $M:=\prod_{j\in J} \mathcal{A}_j $. Then as in $\ref{affbc}$, by Kiehl coherence theorem, $M$ sastisfies the property $\mathcal{P_A}$. 
\end{exema}

\begin{remaa}In the setting of theorem $\ref{aff}$, let $X=\bigcup_{j\in J} X_j$ be a covering of $X$ by a finite number of affinoid domains. Then we have a finite $G$-covering of $X_\mathcal{B}$ by affinoid domains, namely $X_\mathcal{B}=\bigcup_{i\in I} X_i\times_\mathcal{A} \mathcal{B}$, and by properness we can compute its cohomology groups using Čech cohomology, so we have the equality $H^i(X_\mathcal{B},\mathcal{F}_B)=H^i(C_\mathcal{B}^*)$, where $C_\mathcal{B}^i=(C^i)\hat{\otimes}_\mathcal{A} \mathcal{B}$. 
\end{remaa}

In order to show theorem $\ref{aff}$, it is sufficient to show that $H^i(C_\mathcal{B}^*)=H^i(C)\otimes_\mathcal{A} \mathcal{B}$, that is to say that the Banach $\mathcal{A}$-module $\mathcal{B}$ satisfies the property $\mathcal{P}_\mathcal{A}$. This will be our main goal in the rest of this section. 

The following lemma shows in particular that the property $\mathcal{P}$ is $G$-local on the source.

\begin{lemma} \label{redaf}
Let $p:\mathcal{M(B)}\to \mathcal{M(A)}$ be a morphism of $k$-affinoid spaces, and let $q:\mathcal{M(C)\to \mathcal{M(B)}}$ be a morphism of $k$-affinoid spaces.
\begin{enumerate}
    \item Assume that the induced $k$-algebra morphism $\mathcal{B}\to \mathcal{C}$ is faithfully flat and sastisfies $\mathcal{P}_\mathcal{B}$ (e.g. the morphism $q$ is a finite affinoid $G$-covering of $\mathcal{M(B)}$). Then if $\mathcal{C}$ satisfies the property $\mathcal{P}_\mathcal{A}$ then $\mathcal{B}$ satisfies the property $\mathcal{P}_\mathcal{A}$.
    \item Assume that $p$ satisfies $\mathcal{P_A}$ and $q$ satisfies $\mathcal{P}_B$. Then $p\circ q$ satisfies $\mathcal{P_A}$.
    \item Suppose that there exist a $G$-covering $\mathcal{M(B)}=\bigcup_{i\in I} \mathcal{M}(\mathcal{B}_{V_i})$ of $\mathcal{M((B)}$ such that for all $i\in I$, the $\mathcal{A}$-module $\mathcal{B}_{V_i}$ satisfies the property $\mathcal{P_A}$. Then $\mathcal{B}$ satisfies the property $\mathcal{P_A}$. 
\end{enumerate}

\end{lemma}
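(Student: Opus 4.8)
The plan is to prove the three statements of Lemma~\ref{redaf} by carefully tracking what the property $\mathcal{P}$ says about the Čech complex and by exploiting the associativity and flatness properties of the completed tensor product. Throughout, I fix a proper $\mathcal{A}$-space $X$ with a finite $G$-covering and write $(C^*)$ for the associated Čech complex of a coherent sheaf $\mathcal{F}$; by properness the cohomology modules $H^i(C^*)$ are finite over $\mathcal{A}$, so completed and ordinary tensor products agree on them.

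\medskip

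\noindent\textbf{Part (2): composition.} This is the most transparent case, so I would treat it first. The key is the associativity isomorphism $(C^*\hat\otimes_\mathcal{A}\mathcal{B})\hat\otimes_\mathcal{B}\mathcal{C}\cong C^*\hat\otimes_\mathcal{A}\mathcal{C}$ of complexes, which identifies the Čech complex for the base change to $\mathcal{C}$ with the iterated base change. Since $p$ satisfies $\mathcal{P}_\mathcal{A}$, applying $-\hat\otimes_\mathcal{A}\mathcal{B}$ gives $H^i(C^*\hat\otimes_\mathcal{A}\mathcal{B})\cong H^i(C^*)\otimes_\mathcal{A}\mathcal{B}$, and the latter is a finite $\mathcal{B}$-module. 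Now I apply the hypothesis that $q$ satisfies $\mathcal{P}_\mathcal{B}$ to the finite $\mathcal{A}$-space $X$ viewed as a proper $\mathcal{B}$-space after base change, or rather to the complex $C^*\hat\otimes_\mathcal{A}\mathcal{B}$, to obtain $H^i\big((C^*\hat\otimes_\mathcal{A}\mathcal{B})\hat\otimes_\mathcal{B}\mathcal{C}\big)\cong H^i(C^*\hat\otimes_\mathcal{A}\mathcal{B})\otimes_\mathcal{B}\mathcal{C}$. Chaining these isomorphisms with the associativity identification and the finiteness of $H^i(C^*)$ yields $H^i(C^*\hat\otimes_\mathcal{A}\mathcal{C})\cong H^i(C^*)\otimes_\mathcal{A}\mathcal{C}$, which is exactly $\mathcal{P}_\mathcal{A}$ for $p\circ q$. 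The one point to verify is that $C^*\hat\otimes_\mathcal{A}\mathcal{B}$ really is the Čech complex of $\mathcal{F}_\mathcal{B}$ for the base-changed $G$-covering of $X_\mathcal{B}$, which is recorded in the remark preceding Definition~\ref{defipa}.

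\medskip

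\noindent\textbf{Part (1): faithfully flat descent.} Here the goal is to go downward: knowing $\mathcal{P}_\mathcal{A}$ for $\mathcal{C}$, deduce it for $\mathcal{B}$. By the associativity isomorphism the natural map $u_i\colon H^i(C^*)\otimes_\mathcal{A}\mathcal{B}\to H^i(C^*\hat\otimes_\mathcal{A}\mathcal{B})$ becomes, after applying the faithfully flat ordinary functor $-\otimes_\mathcal{B}\mathcal{C}$, the composite of $u_i\otimes_\mathcal{B}\mathcal{C}$ with the comparison maps coming from $\mathcal{P}_\mathcal{B}$ (applied to $C^*\hat\otimes_\mathcal{A}\mathcal{B}$) and from $\mathcal{P}_\mathcal{A}$ for $\mathcal{C}$. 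Since $\mathcal{C}$ satisfies $\mathcal{P}_\mathcal{A}$ and the morphism $\mathcal{B}\to\mathcal{C}$ satisfies $\mathcal{P}_\mathcal{B}$, I can identify the target $H^i(C^*\hat\otimes_\mathcal{A}\mathcal{B})\otimes_\mathcal{B}\mathcal{C}\cong H^i(C^*\hat\otimes_\mathcal{A}\mathcal{C})\cong H^i(C^*)\otimes_\mathcal{A}\mathcal{C}\cong (H^i(C^*)\otimes_\mathcal{A}\mathcal{B})\otimes_\mathcal{B}\mathcal{C}$, so that $u_i\otimes_\mathcal{B}\mathcal{C}$ is an isomorphism; because $\mathcal{B}\to\mathcal{C}$ is faithfully flat, $u_i$ itself is an isomorphism. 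I would check carefully that the completed tensor product $-\hat\otimes_\mathcal{B}\mathcal{C}$ coincides with the ordinary one $-\otimes_\mathcal{B}\mathcal{C}$ on the modules at hand, or else argue with the comparison directly, so that faithful flatness in the algebraic sense applies. The parenthetical example of a finite affinoid $G$-covering is justified as in Remark~\ref{affbc} via Kiehl's coherence theorem.

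\medskip

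\noindent\textbf{Part (3): $G$-locality on the source.} The plan is to reduce $\mathcal{P}_\mathcal{A}$ for $\mathcal{B}$ to the already-established cases by passing to the finite cover $\mathcal{C}:=\prod_{i\in I}\mathcal{B}_{V_i}$. The product of the $\mathcal{P}_\mathcal{A}$-isomorphisms for each $\mathcal{B}_{V_i}$, together with the fact that finite products commute with finite-dimensional cohomology and with $\hat\otimes_\mathcal{A}(-)$, shows that $\mathcal{C}$ satisfies $\mathcal{P}_\mathcal{A}$. Moreover $\mathcal{B}\to\mathcal{C}$ is a finite affinoid $G$-covering, hence faithfully flat and satisfying $\mathcal{P}_\mathcal{B}$ by the example in the statement of part~(1). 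Then part~(1) applies verbatim and gives $\mathcal{P}_\mathcal{A}$ for $\mathcal{B}$. I expect the main obstacle across all three parts to be the handling of the completed tensor product: one must ensure that the associativity isomorphism $(-\hat\otimes_\mathcal{A}\mathcal{B})\hat\otimes_\mathcal{B}\mathcal{C}\cong -\hat\otimes_\mathcal{A}\mathcal{C}$ holds at the level of the Čech complex with the admissible topologies, and that the passages between completed and ordinary tensor products are legitimate precisely because the relevant cohomology modules are finite. Once these compatibilities are in place, each assertion follows from a diagram chase combined with faithful flatness.
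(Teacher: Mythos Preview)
Your proposal is correct and follows essentially the same argument as the paper: both proofs use the associativity $(C^*\hat\otimes_\mathcal{A}\mathcal{B})\hat\otimes_\mathcal{B}\mathcal{C}\cong C^*\hat\otimes_\mathcal{A}\mathcal{C}$ together with finiteness of the $H^i(C^*)$ to chain the $\mathcal{P}$-isomorphisms for part~(2), invoke faithful flatness of $\mathcal{B}\to\mathcal{C}$ to descend the isomorphism in part~(1), and deduce part~(3) from part~(1) by passing to the finite product $\prod_i\mathcal{B}_{V_i}$ (the paper makes the reduction to finite $I$ via compactness of $\mathcal{M}(\mathcal{B})$ explicit, which you should also record).
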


\begin{proof}[Proof]
Let $X$ be a proper $\mathcal{A}$-space provided with a covering by a finite number of affinoid domains. Denote by $(C^*)$ the associated Čech complex. 

We start with the first point. For all $i\geq 0$, we have a natural arrow $H^i(C^*)\otimes_\mathcal{A}\mathcal{B}\to H^i(C^*\hat{\otimes}_\mathcal{A} \mathcal{B})$. By faithfull flatness, this arrow is an isomorphism if and only if its base change $H^i(C^*)\otimes_\mathcal{A}\mathcal{B}\otimes_\mathcal{B} \mathcal{C}\to H^i(C^*\hat{\otimes}_\mathcal{A} \mathcal{B})\otimes_\mathcal{B} \mathcal{C}$ is an isomorphism, but because $\mathcal{C}$ satisfies $\mathcal{P_A}$, the first term is isomorphic to  $H^i(C^*\hat{\otimes}_\mathcal{A} \mathcal{C})$ and because $q$ satisfies $\mathcal{P}_\mathcal{B}$, we have an isomorphism $H^i(C^*\hat{\otimes}_\mathcal{A} \mathcal{B})\otimes_\mathcal{B} \mathcal{C}\to H^i(C^*\hat{\otimes}_\mathcal{A} \mathcal{B}\hat{\otimes}_\mathcal{B}\mathcal{C})=H^i(C^*\hat{\otimes}_\mathcal{A} \mathcal{C})$.

For the second point, we have an isomorphism $H^i(C^*)\otimes_\mathcal{A}\mathcal{C}= H^i(C^*)\otimes_\mathcal{A} \mathcal{B}\otimes_\mathcal{B}\mathcal{C}\to H^i(C^*\hat{\otimes}_\mathcal{A} \mathcal{B})\otimes_\mathcal{B} \mathcal{C}$ because $p$ satisfies $\mathcal{P_A}$ and this last term is isomorphic to $H^i(C^*\hat{\otimes}_\mathcal{A} \mathcal{B}\hat{\otimes}_\mathcal{B} \mathcal{C})$ because $q$ satisfies $\mathcal{P_B}$.

The last point is just a consequence of the first point, because by compactness, we can chose $I$ to be finite, and then, the natural map $\coprod_{i\in I} \mathcal{M}(\mathcal{B}_{V_i})\to \mathcal{A}$ satisfies $\mathcal{P_A}$ and the arrow $\mathcal{B}\to \prod_{i\in I} \mathcal{B}_{V_i}$ is faithfully flat.
\end{proof}

We have the same kind of good behaviour for the property $\mathcal{Q}$. Let's first introduce the following definition:

\begin{defia} Let $f:Y\to X$ be a morphism of $k$-analytic spaces. Then $f$ is said to be properly surjective if there exist a $G$-covering of $X$ by quasi-compact analytic domain each of which is the image of a quasi-compact analytic domain of $Y$.
\end{defia}

\begin{lemma} \label{redQ}
Let $p:S\to T$ and $q:R\to S$ be morphism of $k$-analytic spaces. 
\begin{enumerate}
\item Assume that $q$ is flat and properly surjective and satisfies property $\mathcal{Q}_\mathcal{S}$ (e.g. the morphism $q$ is a $G$-covering). Then if $p\circ q$ satisfies $\mathcal{Q}_T$ then $p$ satisfies $\mathcal{Q}_T$.  
\item Assume that $p$ satisfies $\mathcal{Q}_T$ and $q$ satisfies $\mathcal{Q}_S$. Then $p\circ q $ satisfies $\mathcal{Q}_T$.
\item Assume that there exist a $G$-covering $S=\bigcup_{i\in I} S_i$ by quasi-compact analytic domains such that for all $i\in I$, the arrow $S_i\to T$ satisfies $\mathcal{Q}_T$. Then $p$ satisfies $\mathcal{Q}_T$.
\end{enumerate}
\end{lemma}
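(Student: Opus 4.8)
The plan is to prove the three statements of \autoref{redQ} by directly mimicking the proofs of the corresponding three points of \autoref{redaf}, which I may freely invoke since that lemma is already established. The key structural observation is that property $\mathcal{Q}$ is the geometric/global analogue of property $\mathcal{P}$: for morphisms between affinoid spaces the two notions coincide (as noted in the remark following the definition of $\mathcal{Q}_S$), but $\mathcal{Q}$ makes sense for arbitrary analytic spaces. The conclusion of \autoref{bigtheoflat} — that $p^*(R^if_*\mathcal{F})\to R^if'_*(p'^*\mathcal{F})$ is an isomorphism — is a statement about a morphism of coherent $\mathcal{O}_{S}$-modules, hence is local on the target $S=T$ for the $G$-topology and can be checked on stalks, or equivalently, by \autoref{affbc}, after pulling back along affinoid domains of $T$.

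For the second point, which I expect to be the cleanest, suppose $p:S\to T$ satisfies $\mathcal{Q}_T$ and $q:R\to S$ satisfies $\mathcal{Q}_S$. Given a proper $f:X\to T$ and a coherent $\mathcal{F}$ on $X$, I would form the iterated cartesian squares, pulling $X$ back first to $X_S=X\times_T S$ and then to $X_R=X\times_T R=X_S\times_S R$. The natural base change map for $p\circ q$ factors as a composite: first the base change isomorphism along $p$ (supplied by $\mathcal{Q}_T$), then the base change isomorphism along $q$ applied to the coherent sheaf $p'^*\mathcal{F}$ on $X_S$ (supplied by $\mathcal{Q}_S$, where one uses that the relevant higher direct images pull back correctly). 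Composing two isomorphisms yields the isomorphism for $p\circ q$, exactly as in the second point of \autoref{redaf}. The only care needed is to check that the iterated base-change map agrees with the direct one, which is a formal compatibility of the natural transformations $G$ for composable cartesian squares.

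For the third point, I would argue that since the base change map is $G$-local on the target, it suffices to verify it after restricting $T$ along each member of a $G$-covering refining the images; but here the covering $S=\bigcup_{i\in I}S_i$ is on the \emph{source}. The strategy is to reduce this to the first point: the disjoint union $\coprod_{i\in I}S_i\to S$ is a $G$-covering (hence flat and properly surjective), so if each composite $S_i\to T$ satisfies $\mathcal{Q}_T$ then so does $\coprod_i S_i\to T$, and the first point then transfers this back down to $p:S\to T$. I would note that a $G$-covering automatically satisfies $\mathcal{Q}$ by \autoref{affbc}, and that properness together with the cartesian structure let me assume the index set is finite by quasi-compactness where needed.

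The first point is the genuine obstacle, and it is the analogue of the faithfully-flat-descent argument in the first point of \autoref{redaf}. Here $q:R\to S$ is flat and properly surjective and satisfies $\mathcal{Q}_S$, and I must descend the isomorphism for $p$ from the isomorphism for $p\circ q$. The hard part is that "properly surjective plus flat" must play the role that "faithfully flat" played in the affinoid case: I want to conclude that a map of coherent $\mathcal{O}_S$-modules is an isomorphism from the fact that its pullback along $q$ is an isomorphism. Flatness gives that $q^*$ preserves exactness and commutes with the relevant higher direct images (via \autoref{bigtheoflat} applied to $q$, which is exactly what $\mathcal{Q}_S$ encodes), while proper surjectivity guarantees that $q^*$ is conservative on coherent sheaves in the $G$-local sense — a coherent sheaf on $S$ vanishes iff its pullback to the covering quasi-compact domains of $R$ vanishes, because those domains surject onto a $G$-covering of $S$. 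Assembling these, I would form the natural base change map for $p$, pull it back along $q$, identify this pullback (using $\mathcal{Q}_S$ for $q$ and the fact that $\mathcal{Q}_T$ holds for $p\circ q$) with an isomorphism, and then invoke flat-plus-properly-surjective conservativity to conclude that the original map was already an isomorphism on $S$. The delicate checking is precisely that proper surjectivity is the correct geometric substitute for faithful flatness in this descent.
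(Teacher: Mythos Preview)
Your proposal is correct and follows essentially the same route as the paper: the same iterated cartesian square for point~2, the same reduction of point~3 to point~1 via the properly surjective map $\coprod_i S_i\to S$, and for point~1 the same strategy of pulling back the base-change morphism $h$ along $q$, identifying $q^*h$ (via the hypothesis $\mathcal{Q}_S$) with the base-change morphism for $p\circ q$, and then descending. The only substantive difference is that where you sketch the conservativity of $q^*$ on coherent sheaves from flatness plus proper surjectivity, the paper simply invokes an external descent result (Theorem~3.13 of \autocite{daylies2021descente}) to conclude that $h$ is an isomorphism; your heuristic is in the right direction but would need that reference (or an equivalent argument) to be made rigorous. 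One small wording caution: you should not phrase the commutation of $q^*$ with higher direct images as ``via \autoref{bigtheoflat} applied to $q$,'' since this lemma is a step in the proof of \autoref{bigtheoflat}; the correct justification, which you do also give, is simply the hypothesis that $q$ satisfies $\mathcal{Q}_S$.
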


\begin{proof}[Proof]
Let $f:X\to T$ be a proper $T$-space and $\mathcal{F}$ be a coherent sheaf on $X$. Let $X_S:=X\times_T S$ and $X_R:=X\times_T T$ and denote by $p':X_S\to X$, $q':X_R\to X_S$, $f_S:X_S\to S$ and $f_R:X_R\to R$ be the canonical morphisms. We have the following double cartesian commutative square:

$$ \xymatrix{
    R \ar[r]^{q}   & S \ar[r]^{p} & T \\
    X_R  \ar[u]^{f_R} \ar[r]^{q'} & X_S \ar[u]^{f_S} \ar[r]^{p'} & X\ar[u]^f
  }$$

Assume that $p\circ q$ satisfies $\mathcal{Q}_T$ and $q$ is flat and properly surjective and satisfies $\mathcal{Q}_S$, and let $h:p^* (R^i f_*\mathcal{F})\to (R^i (f_S)_* (p')^*\mathcal{F})$ be the canonical morphism. Then we can pullback $h$ by $q^*$ and because $q$ satisfies $\mathcal{Q}_S$, $q^*h$ is the canonical morphism $(p\circ q)^*(R^i f_* \mathcal{F})\to (R^i (f_R)_* (p'\circ q')^*\mathcal F)$ and the latter is an isomorphism because $p\circ q $ satisfies $\mathcal{Q}_T$. By descent of flat properly surjective morphisms (theorem 3.13 of \autocite{daylies2021descente}), $h$ is an isomorphism, so $p$ satisfies $\mathcal{Q_T}$. 

Assume now that $p$ satisfies $\mathcal{Q}_T$ and $q$ satisfies $\mathcal{Q}_S$. Then we have an isomorphism $h:p^* R^i f_* \mathcal{F}\to R^i((f_S)_*(p')^*\mathcal{F})$. We can now pullback this isomorphism by $q^*$ to get another isomorphism $q^*h$ and because $q$ satisfies $\mathcal{Q}_S$, $q^* h$ is the canonical morphism $(p\circ q)^*(R^i f_* \mathcal{F})\to (R^i (f_R)_* ((p'\circ q'))^*\mathcal F)$, so $p\circ q$ satisfies $\mathcal{Q}_T$.

Now, the last point is just a consequence of the first point, because the morphism $\coprod_{i\in I}S_i\to S$ is properly surjective, and $\coprod_{i\in I}S_i\to T$ satisfies $\mathcal{Q}_T$. \qedhere

\end{proof}

\begin{lemma}\label{libresur} Let $p:S':=\mathcal{M(B)}\to S=\mathcal{M(A)}$ be a morphism of $k$-affinoid spaces such that $\mathcal{B}$ satisfies $\mathcal{P_A}$, and let $L$ be a finite free Banach $\mathcal{B}$-module. Then $L$ satisfies the property $\mathcal{P}_\mathcal{A}$.
\end{lemma}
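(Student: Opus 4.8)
The plan is to reduce the statement to the already-established case $M = \mathcal{B}$ by exploiting that every functor in sight commutes with finite direct sums. Fix a proper $\mathcal{A}$-space $X$ equipped with a finite $G$-covering, and let $(C^*)$ denote the associated Čech complex of an arbitrary coherent sheaf $\mathcal{F}$ on $X$. By properness each $H^i(C^*)$ is a finite $\mathcal{A}$-module (as recorded after Definition \ref{defipa}), so I may freely replace completed tensor products of these modules by ordinary ones. Since $L$ is finite free over $\mathcal{B}$, I would first fix an isomorphism of Banach $\mathcal{B}$-modules $L \cong \mathcal{B}^n$.

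Next I would use that the completed tensor product $- \hat{\otimes}_\mathcal{A} -$ commutes with finite direct sums in each variable: this yields an isomorphism of complexes of Banach modules $C^* \hat{\otimes}_\mathcal{A} L \cong \left( C^* \hat{\otimes}_\mathcal{A} \mathcal{B} \right)^{n}$, whence, since cohomology of a finite direct sum of complexes is the direct sum of the cohomologies, $H^i(C^* \hat{\otimes}_\mathcal{A} L) \cong H^i(C^* \hat{\otimes}_\mathcal{A} \mathcal{B})^{n}$. The same two facts give $H^i(C^*) \hat{\otimes}_\mathcal{A} L \cong \left( H^i(C^*) \hat{\otimes}_\mathcal{A} \mathcal{B} \right)^{n}$ on the source side.

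The key point is then that the comparison morphism $H^i(C^*) \hat{\otimes}_\mathcal{A} M \to H^i(C^* \hat{\otimes}_\mathcal{A} M)$ is natural in the Banach $\mathcal{A}$-module $M$; under the identifications above, the arrow attached to $M = L$ is therefore identified with the $n$-fold direct sum of the arrow attached to $M = \mathcal{B}$. As $\mathcal{B}$ satisfies $\mathcal{P}_\mathcal{A}$, each of the latter is an isomorphism, and a finite direct sum of isomorphisms is an isomorphism; hence $L$ satisfies $\mathcal{P}_\mathcal{A}$. The only genuine content — and the step I would be most careful about — is the naturality claim: one must check that the construction of the comparison map (sending the tensor of a cocycle class by an element of $M$ to the class of the corresponding cocycle in $C^* \hat{\otimes}_\mathcal{A} M$, using right-exactness of the completed tensor product) is functorial in $M$ and compatible with the direct-sum decomposition $L \cong \mathcal{B}^n$. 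This is routine, so no further obstacle is expected.
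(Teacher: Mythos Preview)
Your proof is correct and follows essentially the same approach as the paper: write $L \cong \mathcal{B}^{n}$, use that the completed tensor product and cohomology both commute with finite direct sums (the paper cites \cite[2.1.7.6]{bosch1984non} for the former), and reduce to the hypothesis on $\mathcal{B}$. Your explicit attention to the naturality of the comparison map is a harmless refinement of what the paper leaves implicit.
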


\begin{proof}[Proof] Write $L=\bigoplus_{j\in J} \mathcal{B}$. Let $X$ be a proper $\mathcal{A}$-space provided with a covering by a finite number of affinoid domains and $\mathcal{F}$ a coherent sheaf on $X$. Denote by $(C^*)$ the associated Čech complex. Then by proposition 2.1.7.6 of \autocite{bosch1984non}, we have $C^*\hat{\otimes}_\mathcal{A} L=\bigoplus_{j\in J} (C^*\hat{\otimes}_\mathcal{A}\mathcal{B})$ and because the sum is finite for all $i\geq 0$ we have the equality $ H^i(C^*\hat{\otimes}_\mathcal{A} L)=\bigoplus_{j\in J} H^i(C^*\hat{\otimes}_\mathcal{A} \mathcal{B})$. Because the usual tensor product commute with direct sum, and $\mathcal{B}$ satisfies $\mathcal{P_A}$ we see that $L$ satisfies $\mathcal{P_A}$. \qedhere

\end{proof}

\begin{lemma}\label{quasilisse} Let $p:Y:=\mathcal{M(B)}\to X:=\mathcal{M(A)}$ be a quasi-smooth morphism of $k$-affinoid algebras. Then $\mathcal{B}$ satisfies the property $\mathcal{P_A}$.

\end{lemma}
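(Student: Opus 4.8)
The plan is to use the permanence properties of $\mathcal{P}$ proved above to peel off the structure of $p$ until only one genuinely analytic computation remains, namely that of a relative disk of dimension one. Recall that, $G$-locally on the source, a quasi-smooth morphism factors as an étale morphism $\mathcal{M}(\mathcal{B}) \to \mathcal{M}(\mathcal{A}')$ onto a relative polydisk $\mathcal{A}' := \mathcal{A}\{r^{-1}T_1,\dots,r^{-1}T_n\}$ followed by the projection $\mathcal{M}(\mathcal{A}') \to \mathcal{M}(\mathcal{A})$. Since $\mathcal{P}$ is $G$-local on the source and stable under composition (Lemma~\ref{redaf}), it suffices to treat the étale morphism and the projection separately. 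For the étale morphism I would use that, after a further $G$-localisation, it is the composite of an affinoid domain embedding $\mathcal{M}(\mathcal{B}) \hookrightarrow \mathcal{M}(\mathcal{C})$ and a finite étale morphism $\mathcal{M}(\mathcal{C}) \to \mathcal{M}(\mathcal{A}')$: the former satisfies $\mathcal{P}$ by Remark~\ref{affbc}, while $\mathcal{C}$ is a finite locally free $\mathcal{A}'$-module, so trivialising it over a $G$-covering of $\mathcal{M}(\mathcal{A}')$ and combining Lemma~\ref{libresur}, Remark~\ref{affbc} and the $G$-locality of Lemma~\ref{redaf} shows that $\mathcal{C}$ satisfies $\mathcal{P}_{\mathcal{A}'}$; composition then handles the étale morphism. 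Writing $\mathcal{A}'$ as an iterated one-variable disk and using composition once more, everything reduces to showing that $\mathcal{A} \to \mathcal{A}\{r^{-1}T\}$ satisfies $\mathcal{P}_{\mathcal{A}}$.

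For this remaining step, let $(C^*)$ be the Čech complex of a coherent sheaf on a proper $\mathcal{A}$-space. For any Banach $\mathcal{A}$-module $M$ one has $M \hat{\otimes}_{\mathcal{A}} \mathcal{A}\{r^{-1}T\} = M\{r^{-1}T\}$, the space of power series $\sum_{j\geq 0} m_j T^j$ with $\|m_j\| r^j \to 0$, that is, the weighted $c_0$-direct sum of countably many copies of $M$. Since $H^i(C^*)$ is a finite $\mathcal{A}$-module, the comparison map reads $H^i(C^*)\{r^{-1}T\} \to H^i(C^*\{r^{-1}T\})$, so I must prove that the functor $M \mapsto M\{r^{-1}T\}$ commutes with the cohomology of $C^*$.

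The hard part is that completed tensor product is not exact, and I would circumvent this by exploiting strictness. By Kiehl's finiteness theorem the modules $H^i(C^*)$ are finite, and the Čech complex is moreover strict (its differentials have closed image), so the fundamental sequences $0 \to Z^i \to C^i \to B^{i+1} \to 0$ and $0 \to B^i \to Z^i \to H^i(C^*) \to 0$ are admissible. The key lemma to establish is that $M \mapsto M\{r^{-1}T\}$ sends an admissible short exact sequence $0 \to M' \to M \to M'' \to 0$ to an admissible short exact sequence: exactness can be read off coefficient by coefficient, and the only subtle point, surjectivity with control of the norms, follows by lifting each coefficient $m''_j$ to some $m_j$ with $\|m_j\| \leq c\,\|m''_j\|$ via the open mapping theorem, which preserves the decay $\|m_j\| r^j \to 0$. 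Applying this to the two fundamental sequences identifies $Z^i\{r^{-1}T\}$ and $B^{i+1}\{r^{-1}T\}$ with the kernel and the image of the differential of $C^*\{r^{-1}T\}$, and gives $H^i(C^*\{r^{-1}T\}) = Z^i\{r^{-1}T\}/B^i\{r^{-1}T\} = H^i(C^*)\{r^{-1}T\}$. I expect the main obstacle to be exactly this interplay: securing strictness of the Čech complex and verifying the weighted bounded-lifting property are the two places where the non-archimedean analysis, rather than formal homological algebra, does the work.
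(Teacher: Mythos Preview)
Your outline is correct and follows essentially the same architecture as the paper's proof: factor quasi-smooth as (quasi-)étale onto a relative polydisk, use the $G$-locality and composition properties of $\mathcal{P}$ from Lemma~\ref{redaf}, reduce the étale part to affinoid domain embeddings plus a finite étale map, and handle the polydisk by the fact that $-\,\hat{\otimes}_{\mathcal{A}}\mathcal{A}\{\underline{R}^{-1}\underline{T}\}$ preserves admissible short exact sequences.

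Two small points of comparison. First, for the finite étale step you trivialise the finite locally free module over a $G$-cover and invoke Lemma~\ref{libresur}; the paper instead observes directly that for a finite morphism $\mathcal{O}(D)\to\mathcal{O}(Y)$ one has $C^i\hat{\otimes}_{\mathcal{O}(D)}\mathcal{O}(Y)=C^i\otimes_{\mathcal{O}(D)}\mathcal{O}(Y)$, and then uses algebraic flatness of $\mathcal{O}(Y)$ over $\mathcal{O}(D)$ to commute cohomology with the ordinary tensor product. Both work; yours leans on Lemma~\ref{libresur}, the paper's avoids it. Second, be careful with the order of your factorisation: an étale morphism is $G$-locally \emph{finite étale onto an affinoid domain of the target}, i.e.\ finite étale followed by a domain embedding, rather than a domain embedding followed by finite étale as you wrote. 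This is harmless for the argument (the two pieces are handled by the same lemmas), but the paper has to pass through property $\mathcal{Q}$ precisely because the intermediate étale space need not be affinoid before one $G$-localises further. Finally, your explicit discussion of strictness of the \v{C}ech differentials and the open-mapping lifting in the polydisk step is exactly what underlies the one-line claim the paper makes there; you have correctly identified where the analytic content sits.
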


\begin{proof}[Proof] Let $y\in \mathcal{M(B)}$ and denote by $d$ the relative dimension of $p$ at $y$. Then by corollary 5.3.7 of \autocite{ducros2017families}, there exists an affinoid neighborhood $V_y \subset \mathcal{M(B)}$ such that $p_{\vert V_y}$ is quasi-smooth of relative dimension $d$. By part 1 of $\ref{redaf}$, we can replace $\mathcal{M(B)}$ by $V_y$ and $f$ by $f_{\vert V_y}$ i.e. we can assume that $f$ is quasi-smooth of constant relative dimension $d$ by shrinking again $Y$ if needed.

By corollary 5.3.2 of \autocite{ducros2017families}, we can shrink $Y$ to assume that the coherent $\mathcal{B}$-module $\Omega_{Y/X}$ is free of rank $d$. Now let $f_1,...,f_d$ be analytic functions on $\mathcal{M(B)}$ such that $((df_j)(y))_i$ is a basis of $(\Omega_{\mathcal{B}/\mathcal{A}})_{\mathcal{H}(y)}$. Then by lemma 5.4.5 of \autocite{ducros2017families}, the map $\varphi:Y\to \mathbb{A}^d_X$ is quasi-étale at $y$. Now, the quasi-étale (quasi-smooth of relative dimension zero) locus of $\varphi$ on $Y$ is an open subset (it is even Zariski-open) of $Y$ by theorem 10.7.2 of \autocite{ducros2017families}. 
We can assume that there exist an affinoid neighborhood $U$ of $y$ in $Y$ such that the restriction $\varphi_{\vert U}$ remain quasi-étale, and by an application of part 3 of the lemma $\ref{redaf}$, up to consider a $G$-covering of $Y$, we can assume that $\varphi:Y\to \mathbb{A}^d_X$ is quasi-étale at each point.

By definition of $\mathbb{A}^d_X$ and compactness of $Y$, there exist a compact polydisk $D$ over $\mathcal{A}$ such that $\varphi(Y)\subset D$. Then the induced morphism $Y\to D$ is quasi-étale, and by part 2 of lemma $\ref{redaf}$, it is sufficient to show that $D\to X$ (resp. $Y\to D$)  satisfies $\mathcal{P_\mathcal{A}}$ (resp. $\mathcal{P}_{\mathcal{O}(D)})$. The morphism $D\to X$ obviously satisfies $\mathcal{P_A}$ since if $D=\mathcal{M}(\mathcal{A}\{\underline{R}^{-1}\underline{T}\})$ with $\underline{R}\in (\mathbb{R}_+^*)^d$ a polyradius, $Z$ is a proper $\mathcal{A}$-space provided with a covering by a finite number of affinoid domains, $\mathcal{F}$ is a coherent sheaf on $Z$ and $(C^*)$ is the associated Čech complex then for all $i\geq 0$ we have $H^i(C^*\{\underline{R}^{-1}\underline{T}\})=H^i(C^*)\otimes_\mathcal{A}\mathcal{A}\{\underline{R}^{-1} \underline{T}\}$ because for every exact admissible sequence of Banach $\mathcal{A}$-modules $0\to M'\to M \to M''\to 0$, the sequence $0\to M'\hat{\otimes}_\mathcal{A}\mathcal{A}\{\underline{R}^{-1} \underline{T}\}\to M\hat{\otimes}_\mathcal{A}\mathcal{A}\{\underline{R}^{-1} \underline{T}\}\to M''\hat{\otimes}_\mathcal{A}\mathcal{A}\{\underline{R}^{-1} \underline{T}\}\to 0$ is also exact admissible.

It remains to show that if $q:Y\to D$ is a quasi-étale morphism between affinoid spaces then $\mathcal{O}(Y)$ satisfies $\mathcal{P}_\mathcal{D}$. By part 3 of lemma $\ref{redaf}$, we can argue $G$-locally on $Y$, and assume that $Y$ is an affinoid domain of an étale space $T$ over $D$. Let $p:T\to D$ be the structural morphism and $j:Y\to T$ the inclusion that identifie $Y$ with an affinoid domain of $T$. To show that $\mathcal{O}(Y)$ satisfies $\mathcal{P}_\mathcal{D}$, it is sufficient to show that $q$ satisfies $\mathcal{Q}_D$. Let now suppose that étale morphisms satisfy the property $\mathcal{Q}$. Then $q$ also satisfies $\mathcal{Q}$ by part 2 of lemma $\ref{redQ}$ because affinoid domain embeddings satisfy property $\mathcal{Q}$.


We can now assume that $q_1:Y\to D$ is an étale morphism between non necessary affinoid spaces, and it is sufficient to show that $q_1$ satisfies the property $\mathcal{Q}$. By definition, there exists $Y=\bigcup_{j\in J} Y_j$ a $G$-covering of $Y$ by affinoid domains and $D_j$ affinoid domains of $D$ such that the restriction $q_{1\vert Y_j}:Y_j\to D_j$ is finite and étale. Let $q_2:\coprod_{j\in J} Y_j\to \coprod_{j\in J} D_j$ be the induced finite morphism and $r_2:\coprod_{j\in J} Y_j\to Y$ and $r_1:\coprod_{j\in J} D_j\to D$ the induced $G$-covering. We now have the following commutative square : 

$$ \xymatrix{
    \coprod_{j\in J} Y_j \ar[r]^{q_2} \ar[d]^{r_2}  & \coprod_{j\in J} D_j \ar[d]^{r_1} \\
    Y \ar[r]^{q_1} & D
  }$$
  
By lemma part 3 of $\ref{redQ}$, $q_1$ satisfies the property $\mathcal{Q}_D$ if the morphism $\coprod_{j\in J} Y_j\to D$ satisfies property $\mathcal{Q}_D$. Now, because the morphism $\coprod_{j\in J} D_j\to D$ satisfies property $\mathcal{Q}_D$, again by part 2 of lemma $\ref{redQ}$, it is sufficient to show that the morphism $\coprod_{j\in J} Y_j\to \coprod D_j$ satisfies the property $\mathcal{Q}$.

We can now assume that the morphism $q:Y\to D$ is a finite étale morphism of affinoid spaces and we want to show that it possess the property $\mathcal{Q}$. Let $f:Z\to D$ be a proper ${D}$-space provided with a covering by a finite number of affinoid domains $Z=\bigcup Z_r$. Let $\mathcal{F}$ be a coherent sheaf on $Z$. Denote by $(C^*)$ the associated Čech complex. Let $f'=f\times Y$ and $q'=q\times X$. Then, by properness, we have $H^i(Z\times_D Y,q'^*\mathcal{F})=H^i(C^*\hat{\otimes}_{\mathcal{O}(D)} \mathcal{O}(Y))$ because the inverse image $(q'^{-1}(Z_r))$ induce an affinoid covering of $X\times_D Y$, and because the arrow $\mathcal{O}(D)\to \mathcal{O}(Y)$ is finite, we have $C^i\hat{\otimes}_{\mathcal{O}(D)}\mathcal{O}(Y)=C^i{\otimes}_{\mathcal{O}(D)}\mathcal{O}(Y)$ and because the same arrow is flat by proposition 4.3.1 of \autocite{ducros2017families}, the cohomology of the complex $(C^*)$ commute with (ordinary) tensor product, so we have $H^i(C^*{\otimes}_{\mathcal{O}(D)}\mathcal{O}(Y))=H^i(C^*)\otimes_{\mathcal{O}(D)} \mathcal{O}(Y)$, so  we eventually have the equality $H^i(X\times_D Y,q'^*\mathcal{F})=H^i(C^*)\otimes_{\mathcal{O}(D)} \mathcal{O}(Y)$ and $q$ satisfies property $\mathcal{Q}_D$. \qedhere


\end{proof}

\begin{remaa} \label{freeoverqs}
Combining lemma $\ref{libresur}$ and $\ref{quasilisse}$, we see that if $\mathcal{A}$ is a $k$-affinoid space, and $M$ is any finite free module on a quasi-smooth algebra, then $M$ satisfies the property $\mathcal{P}_\mathcal{A}$.
\end{remaa}

Now, we give a last lemma that will allow us to show the theorem $\ref{aff}$ by induction on the relative dimension of the morphism.

\begin{lemma} \label{induc} Let $p:R\to S$ be a morphism of affinoid spaces with $\mathcal{M(A)}=S$ and $\mathcal{F}$ be a coherent sheaf on $R$ that is flat over $S$. Assume that there exists a quasi-smooth affinoid $S$-space $T=\mathcal{M(C)}$, and $\mathcal{L}$, $\mathcal{N}$ some coherent sheaves on $T$ such that $L:=\mathcal{L}(T)$ is free over $\mathcal{C}$, $\mathcal{N}$ is flat over $S$ and such that there exists a finite $S$-map $\pi:\mathrm{supp}~\mathcal{F}\to T$ and an exact sequence of finite $C$-modules $0\to L\to  H^0(R,\mathcal{F})\to N \to 0$, where $N$ is equal to $\mathcal{N}(T)$.

Assume that $N$ satisfies property $\mathcal{P}_{\mathcal{A}}$. Then $F:=H^0(R,\mathcal{F})$ also satisfies $\mathcal{P}_{\mathcal{A}}$.

\end{lemma}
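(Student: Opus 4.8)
The plan is to feed the short exact sequence $0\to L\to F\to N\to 0$ into the $\mathcal{P}_\mathcal{A}$ machinery and to finish with a five-lemma comparison between ordinary and completed tensor products. First I would record the relevant flatness and the inputs we already have. Since quasi-smooth morphisms are flat, $\mathcal{C}$ is flat over $\mathcal{A}$, hence $L$, being free over $\mathcal{C}$, is flat over $\mathcal{A}$; as $N$ is flat over $\mathcal{A}$ by hypothesis and flatness is stable under extensions, $F$ is flat over $\mathcal{A}$ as well. Moreover $L$, being finite free over the quasi-smooth algebra $\mathcal{C}$, satisfies $\mathcal{P}_\mathcal{A}$ by Remark \ref{freeoverqs}, while $N$ satisfies $\mathcal{P}_\mathcal{A}$ by assumption. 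The arrows are $\mathcal{C}$-linear between finite Banach $\mathcal{C}$-modules, hence admissible, so $0\to L\to F\to N\to 0$ is an admissible exact sequence of Banach $\mathcal{A}$-modules.

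Now fix an arbitrary proper $\mathcal{A}$-space $X$ with a finite $G$-covering and a coherent sheaf $\mathcal{G}$ on it, and let $(C^*)$ be the associated Čech complex. The key analytic input, and the step I expect to be the main obstacle, is that because the \emph{cokernel} $N$ is flat over $\mathcal{A}$, completed tensoring the admissible sequence by each term $C^i$ keeps it admissible exact: this is the non-archimedean analogue of the vanishing of $\operatorname{Tor}_1^{\mathcal{A}}(C^i,N)$, and it has to be extracted from Ducros's flatness theory rather than from naive homological algebra with $\hat{\otimes}_\mathcal{A}$. Granting this, I obtain a short exact sequence of complexes
\[
0\to C^*\hat{\otimes}_\mathcal{A} L\to C^*\hat{\otimes}_\mathcal{A} F\to C^*\hat{\otimes}_\mathcal{A} N\to 0,
\]
hence a long exact sequence in cohomology. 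The same flatness, now as a purely algebraic fact, makes the ordinary sequence $0\to C^*\otimes_\mathcal{A} L\to C^*\otimes_\mathcal{A} F\to C^*\otimes_\mathcal{A} N\to 0$ exact, yielding a second long exact sequence; the natural completion maps $C^*\otimes_\mathcal{A}(-)\to C^*\hat{\otimes}_\mathcal{A}(-)$ assemble these into a commutative ladder that is compatible with the connecting homomorphisms.

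Finally I would simplify the top row and invoke the five lemma. Because $L$, $F$, $N$ are flat over $\mathcal{A}$, tensoring commutes with cohomology, so $H^i(C^*\otimes_\mathcal{A} M)=H^i(C^*)\otimes_\mathcal{A} M$ for $M\in\{L,F,N\}$; and since $N$ is flat, tensoring the module sequence by the fixed module $H^i(C^*)$ remains short exact, which forces every connecting map of the top long exact sequence to vanish and identifies the top row with the short exact sequences $0\to H^i(C^*)\otimes L\to H^i(C^*)\otimes F\to H^i(C^*)\otimes N\to 0$. The vertical comparison maps are precisely the arrows $\alpha_M\colon H^i(C^*)\otimes_\mathcal{A} M\to H^i(C^*\hat{\otimes}_\mathcal{A} M)$ appearing in the definition of $\mathcal{P}_\mathcal{A}$, and $\alpha_L$, $\alpha_N$ are isomorphisms by the first paragraph. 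Applying the five lemma at the term $H^i(C^*)\otimes F$, whose four neighbours in the ladder are copies of $\alpha_L$ and $\alpha_N$, shows that $\alpha_F$ is an isomorphism for every $i$. As $X$, the covering, and the coherent sheaf $\mathcal{G}$ were arbitrary, $F$ satisfies $\mathcal{P}_\mathcal{A}$, as desired.
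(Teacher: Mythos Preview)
Your proof is correct and follows essentially the same line as the paper's: obtain the short exact sequence $0\to C^i\hat{\otimes}_\mathcal{A}L\to C^i\hat{\otimes}_\mathcal{A}F\to C^i\hat{\otimes}_\mathcal{A}N\to 0$ from Ducros's flatness results (the paper makes this precise by applying theorem 4.5.7 of \autocite{ducros2017families} to the cartesian square $T\times_S Y_i\to Y_i$, $T\to S$ and the sheaf $q_i^*\mathcal{G}$), pass to the long exact sequence, and compare with the algebraic side using that $L$ and $N$ satisfy $\mathcal{P}_\mathcal{A}$. The only difference is cosmetic: you apply the five lemma directly to the long-exact ladder, whereas the paper first argues by a short diagram chase that the bottom connecting maps vanish and then compares the resulting three-term rows---your route is slightly more streamlined.
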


\begin{proof}[Proof] Let $X$ be a proper $\mathcal{A}$-space provided with a covering by a finite number of affinoid domains $X=\bigcup X_i$ and a coherent sheaf $\mathcal{G}$ on it. Denote by $(C^*)$ the Čech complex associated to $\mathcal{G}$. Let $q_i:Y_i:=\coprod_{i_0<..<i_i} X_{i_0}\cap ... \cap X_{i_i}\to X$ be the morphism of $k$-affinoid spaces given by inclusion. Then we have the equality $H^0(Y_i,q_i^*\mathcal{G})=C^i$.
We also have an exact sequence of coherent $\mathcal{O}_T$-modules $0\to \mathcal{L}\to \pi_* \mathcal{F}\to \mathcal{N}\to 0$. We can summarize the situation by the following commutative cartesian square, where $Z:= T\times_S Y_i$:

$$ \xymatrix{
     Z\ar[r] \ar[d]  & Y_i  \ar[d] \\
    T \ar[r] & S
  }$$
By flatness of $\mathcal{N}$, we can apply theorem 4.5.7 of \autocite{ducros2017families} to this square, the previous exact sequence and the coherent sheaf $q_i^*\mathcal{G}$ on $Y_i$ and we have the following exact sequence for all $i\geq 0$ : $0\to C^i\hat{\otimes}_\mathcal{A} L\to C^i\hat{\otimes}_\mathcal{A} F\to C^i\hat{\otimes}_\mathcal{A} N\to 0$. 

This short exact sequence of complexes of modules now induces a long exact sequence of modules, and this shows that the second row of the following commutative diagram is exact at the middle:

$$ \xymatrix{
    0 \ar[r] \ar[d]  & H^n(C^*)\otimes_\mathcal{A} L \ar[d] \ar[d] \ar[r] \ar[d] & H^n(C^*)\otimes_\mathcal{A} F  \ar[r] \ar[d] & H^n(C^*)\otimes_\mathcal{A} N \ar[r]\ar[d] & 0\ar[d] \\
    0 \ar[r] & H^n(C^*\hat{\otimes}_\mathcal{A}L) \ar[r] & H^n(C^*\hat{\otimes}_\mathcal{A} F) \ar[r] & H^n(C^*\hat{\otimes}_\mathcal{A} N)\ar[r] & 0
  }$$

The first row of this commutative diagram is exact at each of its term because since $\mathcal{N}$ is flat over $S$, the $\mathcal{A}$-module $N$ is $\mathcal{A}$ flat, so we have for all $n\geq 0$ the equality $\mathrm{Tor}^\mathcal{A}_1~(C^n,N)=0$.
Now, by diagram-chasing, since $N$ satisfies $\mathcal{P_A}$, the arrow $H^n(C^*\hat{\otimes}_\mathcal{A} F)\to H^n(C^*\hat{\otimes}_\mathcal{A} N)$ is surjective, and this shows that the connecting morphism $H^n(C^*\hat{\otimes}_\mathcal{A} N)\to H^{n+1}(C^*\hat{\otimes}_\mathcal{A} L)$ from the long exact sequence is injective, so the arrow $H^{n+1}(C^*\hat{\otimes}_\mathcal{A} L)\to H^{n+1}(C^*\hat{\otimes}_\mathcal{A} F)$ is injective, so the second row is also exact on its left part for $n\geq 1$ applying the above with $n$ instead of $n+1$. The case $n=0$ is also exact since we have a commutative diagram of modules with exact rows:

$$ \xymatrix{
     0\ar[r]  & C^0\hat{\otimes}_\mathcal{A} L  \ar[d]\ar[r] & C^0\hat{\otimes}_\mathcal{A} F\ar[d] \\
    0 \ar[r] & C^1\hat{\otimes}_\mathcal{A} L \ar[r] & C^1\hat{\otimes}_\mathcal{A} F
  }$$
And by diagram-chasing, the canonical arrow $H^0(C^0\hat{\otimes}_\mathcal{A} L)\to H^0(C^0 \hat{\otimes}_\mathcal{A} F)$ is also injective.
Now, since $N$ satisfies $\mathcal{P}_\mathcal{A}$, and $L$ also by $\ref{freeoverqs}$ because it is free over a quasi-smooth algebra over $\mathcal{A}$, on the three descending morphism of the diagram, two are isomorphisms, and by diagram chase, we dedude that $H^n(C^*)\otimes_\mathcal{A} F\to H^{n}(C^*\hat{\otimes}_\mathcal{A} F)$ is also an isomorphism and it is what we aimed to show.
\end{proof}

We will now use the previous lemma repeatedly to show : 

\begin{lemma} \label{caspur}Let $\mathcal{A}$ be an affinoid algebra, $d\geq 0$ an integer, $T$ a quasi-smooth space $T\to \mathcal{M(A)}$ purely of relative dimension $d$, and $\mathcal{F}$ a coherent sheaf on $T$ that is flat on $\mathcal{A}$. Then for all $i\geq 0$, $H^0(T,\mathcal{F})$ satisfies the property $\mathcal{P}_\mathcal{A}$.
\end{lemma}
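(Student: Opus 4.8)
The plan is to run a finite descending recursion along Ducros's dévissage (Proposition~\ref{deviss}), using Lemma~\ref{induc} as the single recursion step and Remark~\ref{freeoverqs} to anchor it, and then to globalize by the $G$-local nature of property $\mathcal{P}$. Since $\mathrm{supp}~\mathcal{F}$ is compact and property $\mathcal{P}_\mathcal{A}$ is $G$-local on the source (Lemma~\ref{redaf}), it should suffice to produce, for each point $y\in \mathrm{supp}~\mathcal{F}$, an affinoid neighborhood $V$ of $y$ such that $H^0(V,\mathcal{F})$ satisfies $\mathcal{P}_\mathcal{A}$ (on the complement of the support $\mathcal{F}$ vanishes, so the corresponding modules are zero and trivially satisfy $\mathcal{P}_\mathcal{A}$), and then to glue over a finite subcover.

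To treat a fixed point $y$, I would apply Proposition~\ref{deviss} to the quasi-smooth morphism $T\to \mathcal{M(A)}$, the flat coherent sheaf $\mathcal{F}$, and $y$. This produces an affinoid neighborhood $V$ of $y$, quasi-smooth affinoid $\mathcal{A}$-spaces $T_i=\mathcal{M}(C_i)$, finite $\mathcal{A}$-maps $\pi_i$, free $C_i$-modules $L_i$ and finite $C_i$-modules $P_i$ with $P_r=0$, fitting into short exact sequences $0\to L_i\to P_{i-1}\to P_i\to 0$ for $1\leq i\leq r$, where I set $P_0:=H^0(V,\mathcal{F})$ (so the case $i=1$ is the sequence $0\to L_1\to H^0(V,\mathcal{F})\to P_1\to 0$). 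By the remark following Proposition~\ref{deviss}, the coherent module attached to each $P_i$ is flat over $\mathcal{M(A)}$, and the maps $\pi_i$ identify $P_{i-1}$ with $H^0$ of a flat coherent sheaf whose support maps finitely to $T_i$; thus each of these sequences meets the hypotheses of Lemma~\ref{induc} with $T=T_i$, $L=L_i$ and $N=P_i$.

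I would then climb the dévissage from the bottom. The sequence for $i=r$ gives $P_{r-1}\cong L_r$, which is finite free over the quasi-smooth algebra $C_r$ and hence satisfies $\mathcal{P}_\mathcal{A}$ by Remark~\ref{freeoverqs}. Assuming inductively that $P_i$ satisfies $\mathcal{P}_\mathcal{A}$, Lemma~\ref{induc} applied to $0\to L_i\to P_{i-1}\to P_i\to 0$ shows that $P_{i-1}$ satisfies $\mathcal{P}_\mathcal{A}$ (the free module $L_i$ satisfies $\mathcal{P}_\mathcal{A}$ by Remark~\ref{freeoverqs}, as required in the proof of that lemma). Descending all the way to $i=1$ yields that $P_0=H^0(V,\mathcal{F})$ satisfies $\mathcal{P}_\mathcal{A}$, which is the desired local statement.

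The step I expect to be the main obstacle is the passage from the local modules $H^0(V,\mathcal{F})$ to the global module $H^0(T,\mathcal{F})$. Lemma~\ref{redaf} is stated for structure algebras, whereas here I must descend the sections of a coherent sheaf that is \emph{not} the structure sheaf. I would set $B:=\mathcal{O}(T)$, $M:=H^0(T,\mathcal{F})$, and consider a finite affinoid $G$-covering $T=\bigcup_i V_i$ subordinate to the neighborhoods above; since $M$ is finite over $B$ one has $M\otimes_B \big(\prod_i\mathcal{O}(V_i)\big)=\prod_i H^0(V_i,\mathcal{F})$, which satisfies $\mathcal{P}_\mathcal{A}$ as a finite product of modules that do. The delicate point is that the faithfully flat descent in Lemma~\ref{redaf} must be carried out for the infinite Banach module $C^\ast\hat{\otimes}_\mathcal{A} M$ along the covering map $B\to\prod_i\mathcal{O}(V_i)$, which is not finite, so that ordinary and completed tensor products no longer obviously agree; I would control this using Tate's acyclicity together with property $\mathcal{P}$ of the covering (which governs precisely the interaction of the completed tensor products with cohomology), possibly after passing to a common refinement so that all multiple intersections $H^0(V_{i_0}\cap\cdots\cap V_{i_p},\mathcal{F})$ are also known to satisfy $\mathcal{P}_\mathcal{A}$ by the same local argument.
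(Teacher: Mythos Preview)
Your local argument is correct and is the same mechanism as the paper's: both climb the dévissage via repeated applications of Lemma~\ref{induc}, anchored by Remark~\ref{freeoverqs}. The only organizational difference is that the paper packages this as an induction on the relative dimension $d$---at the inductive step it keeps only the first two levels of the dévissage and invokes the hypothesis on $P_2$, which lives over a quasi-smooth $\mathcal{A}$-space of strictly smaller dimension---whereas you unwind the full finite chain from $P_r=0$ up to $H^0(V,\mathcal{F})$ in one pass. These are equivalent.

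Your concern about the passage from $H^0(V,\mathcal{F})$ to $H^0(T,\mathcal{F})$ is legitimate; the paper's own proof silently writes $T$ where the dévissage only produces the neighborhood $V$, so this step is not addressed there either. Your sketch can be made precise and does work. Since $M:=H^0(T,\mathcal{F})$ is finite over $B:=\mathcal{O}(T)$, one has $C^\ast\hat{\otimes}_\mathcal{A} M=(C^\ast\hat{\otimes}_\mathcal{A} B)\otimes_B M$, which is the Čech complex of a coherent sheaf on the proper $B$-space $X\times_\mathcal{A} T$. The affinoid domain embeddings $V_j\hookrightarrow T$ satisfy $\mathcal{P}_B$ (this is exactly the example recorded right after Definition~\ref{defipa}, i.e.\ Kiehl's coherence theorem), so $H^i(C^\ast\hat{\otimes}_\mathcal{A} M)\otimes_B\mathcal{O}(V_j)\simeq H^i(C^\ast\hat{\otimes}_\mathcal{A} M_j)$ for each $j$; hence the comparison map for $M$ becomes an isomorphism after $\otimes_B\mathcal{O}(V_j)$ for every $j$, and faithful flatness of $B\to\prod_j\mathcal{O}(V_j)$ finishes. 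Your instinct to use ``property $\mathcal{P}$ of the covering'' is thus exactly the right move, and there is no need to treat the higher intersections separately.
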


\begin{proof}[Proof]
We will show this by induction on the relative dimension $d\geq 0$. 

Assume $d=0$. Then by $\ref{deviss}$, there exist a quasi-smooth space $T_1$ over $\mathcal{A}$, of pure relative dimension zero and a finite morphism of $\mathcal{A}$-analytic spaces $\pi_1:\mathrm{supp}~ \mathcal{F}\to T_1$, with a coherent sheaf $\mathcal{L}_1$ on $T_1$ whose global section $L_1:=H^0(T_1,\mathcal{L}_1)$ are free over $\mathcal{A}$ and such that we have an isomorphism $0\to L_1\to H^0(T,\mathcal{F})\to 0$. From this equality, we deduce that $H^0(T,\mathcal{F})$ is free over a quasi-smooth algebra over $\mathcal{A}$, so by remark $\ref{freeoverqs}$, the $\mathcal{A}$-module $H^0(T,\mathcal{F})$ satisfies the property $\mathcal{P_A}$.

Assume now that the proposition hold for all $d'\leq d$, and let $T\to \mathcal{M(A)}$ be a quasi-smooth space purely of dimension $d+1$ with $\mathcal{F}$ a coherent sheaf on $T$. Then using the same notation as proposition $\ref{deviss}$, we have an affinoid quasi-smooth $\mathcal{A}$-space $T_1=\mathcal{M(C_1)}$ of pure relative dimension $n_r\leq d+1$, an affinoid quasi-smooth $\mathcal{A}$-space $T_2=\mathcal{M(C_2)}$ of pure relative dimension $n_r\leq d$, some coherent $\mathcal{C}_i$-module $L_i$ and $P_i$ for $i\in \{1;2\}$ on $\mathcal{C}_i$ such that $L_i$ is free over $\mathcal{C}_i$, $P_i$ is (analytically) flat over $\mathcal{M(A)}$ and some finite morphism $\pi_1:\mathrm{supp}~ \mathcal{F}\to T_1$ and $\pi_2:\mathrm{supp} ~ P_1\to T_2$. Now, we have an exact sequence $0\to L_2\to H^0(T_1,P_1)\to P_2\to 0$, and by induction hypothesis, $P_2$ satisfies $\mathcal{P_A}$, and $L_2$ is free over $C_2$, so by the previous lemma $\ref{induc}$, $P_1$ satisfies $\mathcal{P_A}$. Now applying again the lemma $\ref{induc}$ to the exact sequence of $\mathcal{C}_1$ modules $0\to L_1\to H^0(T,\mathcal{F})\to P_1\to 0$ we get that $H^0(T,\mathcal{F})$ satisfies $\mathcal{P}_\mathcal{A}$.
\end{proof}

Now, theorem $\ref{aff}$ is just an easy consequence of the previous lemmas.
\begin{proof}[Proof]
Let $p:\mathcal{M(B)}\to \mathcal{M(A)}$ be a flat morphism between affinoid spaces. We want to show that the $\mathcal{A}$-module $\mathcal{B}$ satisfies the property $\mathcal{P}_\mathcal{A}$. By lemma $\ref{redaf}$, the property is $G$-local on $\mathcal{M(B)}$ and it is sufficient to show it on an affinoid neighborhood of any point.

So let $y\in \mathcal{M(B)}$ and let $n$ be the relative dimension of $p$ at $y$. Using notation of the proposition $\ref{deviss}$, there exist an affinoid neighborhood $V=\mathcal{M}(\mathcal{B}_V)$ of $y$ in $Y$ and a quasi-smooth $\mathcal{A}$-space $T_1=\mathcal{M(C_1)}$ purely of dimension $d$ smaller than $n$, finite $C_1$-modules $P_1$ and $L_1$ such that $L_1$ is free over $\mathcal{C}_1$ and a finite morphism $\pi_1:V\to T_1$ such that there exist an exact sequence of $\mathcal{C}_1$-modules $0\to L_1\to \mathcal{B}_V\to P_1\to 0$.

Now by the $\ref{caspur}$, $P_1$ satisfies $\mathcal{P_A}$ and $L_1$ is free over a quasi-smooth $\mathcal{A}$-algebra, so by lemma $\ref{induc}$, the $\mathcal{A}$-module $\mathcal{B}_V$ satisfies $\mathcal{P_A}$, this shows that the property holds for $\mathcal{B}$ and the theorem $\ref{aff}$ is now proved. \qedhere

\end{proof}

\section{Proper base change}

We now want to show a proper base change theorem for coherent modules. We remind two following theorems that are stated at section 5 of the chapter 2 of \autocite{mumford1974abelian} for the cohomology of proper schemes.

\begin{prop} \label{Homoex} Let $A$ be a noetherian ring and $C^*$ be a complex of $A$-modules such that its cohomology groups $H^i(C^*)$ are finitely generated $A$-modules and $C^p\neq \{0\}$ if and only if $0\leq p\leq n$. Then there exist a complex $K^*$ of finitely generated $A$-modules such that $K^p \neq \{0\}$ if and only if $0\leq p \leq n$ and $K^p$ is free for $1\leq p\leq n$, and a quasi-isomorphism of complexes of $A$-modules $\varphi:K^*\to C^*$. Moreover, if the $C^p$ are $A$-flat, then $K^0$ can be taken to be $A$-flat.

\end{prop}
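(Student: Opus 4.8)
The plan is to realize $K^*$ as a smart truncation of a finite free resolution of the whole complex $C^*$, and to extract the flatness of $K^0$ from an acyclic mapping cone. First I would forget about the lower bound and construct, by descending induction on $p$ starting from $P^{n+1}=0$, a complex $P^*$ of finite free $A$-modules with $P^p=0$ for $p>n$ together with a chain map $\varphi:P^*\to C^*$ that is a quasi-isomorphism. At the inductive step one has already built $P^{\geq p+1}\to C^{\geq p+1}$ inducing isomorphisms on $H^i$ for $i>p$ and the right surjectivity in degree $p$; one then takes $P^p$ to be a finite free module surjecting onto the finitely generated module of relations that must be resolved (cocycles of $P^{p+1}$ lifting compatibly to $C^p$, together with generators of $H^p(C^*)$ and of the kernel to be killed), and defines $\varphi^p$ and $d^p$ on a basis. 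Here noetherianity of $A$ is exactly what guarantees that these modules of relations are finitely generated, so that each $P^p$ can be chosen genuinely finite free, while finiteness of the $H^i(C^*)$ is what keeps the cohomological contribution finitely generated.

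Since $H^p(P^*)\cong H^p(C^*)=0$ for $p<0$, the complex $P^*$ is exact in negative degrees. I then set $K^0:=\operatorname{coker}(d_P^{-1}\colon P^{-1}\to P^0)$, $K^p:=P^p$ for $1\leq p\leq n$, and $K^p:=0$ otherwise, with differentials induced from $P^*$. Because $C^{-1}=0$, the composite $P^{-1}\to P^0\to C^0$ vanishes, so $\varphi^0$ descends to a map $K^0\to C^0$ and yields a chain map $\varphi:K^*\to C^*$. The smart truncation $\tau^{\geq 0}$ does not alter $H^i$ for $i\geq 0$ and annihilates the already vanishing cohomology in negative degrees, so $\varphi$ is still a quasi-isomorphism. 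By construction $K^0$ is a quotient of the finite free module $P^0$, hence finitely generated, the $K^p$ are finite free for $1\leq p\leq n$, and $K^p=0$ outside $[0,n]$. Should $K^p$ happen to vanish for some $p\in[0,n]$, I would take the direct sum with contractible complexes $A\xrightarrow{\operatorname{id}}A$ placed in the appropriate adjacent degrees; these are acyclic with free terms, so this preserves the quasi-isomorphism, the freeness in positive degrees, and the range, while forcing $K^p\neq 0$ throughout $[0,n]$.

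For the addendum, assume the $C^p$ are $A$-flat; I claim $K^0$ is then flat. Consider the mapping cone of $\varphi$, with $\operatorname{Cone}(\varphi)^p=K^{p+1}\oplus C^p$. As $\varphi$ is a quasi-isomorphism this complex is acyclic, and it is concentrated in degrees $[-1,n]$ with leftmost term $K^0$:
\[ 0 \to K^0 \to K^1\oplus C^0 \to K^2\oplus C^1 \to \cdots \to K^n\oplus C^{n-1} \to C^n \to 0. \]
Every term except $K^0$ is flat, being the direct sum of a finite free module and a flat $C^p$ (with $C^n$ flat at the right end). Splitting this bounded exact sequence into short exact sequences $0\to Z^i\to F^i\to Z^{i+1}\to 0$ and running the long exact $\operatorname{Tor}$ sequence from the flat right-hand end downwards shows by descending induction that each syzygy $Z^i$ is flat; since $Z^0\cong K^0$, the module $K^0$ is flat.

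The substantive part of the argument is the resolution in the first paragraph: it is the standard construction of a finite free resolution of a bounded-above complex with coherent cohomology, and noetherianity of $A$ together with finiteness of $H^*(C^*)$ is precisely what is needed to keep all modules of relations finitely generated so that honest finite free modules appear at every stage. The only point that does not follow formally from the truncation is the flatness of $K^0$, which is why I prefer the acyclic-cone device combined with the flat-syzygy lemma over a direct analysis of the cokernel $K^0=\operatorname{coker}(d_P^{-1})$.
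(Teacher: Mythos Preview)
Your proof is correct. The paper does not supply its own argument for this proposition; it merely recalls the statement from Mumford's \emph{Abelian Varieties}, Chapter~II, \S5, and cites it as a known input. The argument you give is essentially Mumford's: construct a bounded-above complex of finite free modules mapping quasi-isomorphically to $C^*$ by descending induction (noetherianity of $A$ and finiteness of the $H^i(C^*)$ being exactly what keeps the modules of relations finitely generated at each step), then take the smart truncation $\tau^{\geq 0}$ to obtain $K^*$. Your deduction of the flatness of $K^0$ via the acyclic mapping cone of $\varphi$ and the flat-syzygy lemma is likewise the standard device, and your handling of the ``if and only if'' nonvanishing condition by adding contractible free summands is a harmless cosmetic fix that preserves both freeness in positive degrees and flatness in degree~$0$.
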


\begin{prop} \label{homoprop} Let $A$ be a noetherian ring,  and $C^*$, $K^*$ be any finite complexes of flat $A$-modules and let $\varphi:K^*\to C^*$ be a quasi-isomorphism of complexes of $A$-modules. Then for every $A$-algebra $B$, the maps $H^p(K^*\otimes_A B)\to H^p(C^* \otimes_A B)$ are isomorphisms for all $p\in \mathbb{Z}$ i.e. the natural morphism $\varphi\otimes_A B$ is a quasi-isomorphism.
\end{prop}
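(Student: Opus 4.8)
The plan is to reduce the statement to the acyclicity of a single complex by means of the mapping cone. I would form $\mathrm{Cone}(\varphi)$, the mapping cone of $\varphi$, whose term in degree $n$ is $K^{n+1}\oplus C^n$. Since $K^*$ and $C^*$ are finite complexes of flat $A$-modules and finite direct sums of flat modules are flat, $\mathrm{Cone}(\varphi)$ is again a finite (hence bounded) complex of flat $A$-modules. The defining property of the cone is that $\varphi$ is a quasi-isomorphism if and only if $\mathrm{Cone}(\varphi)$ is acyclic; moreover, since $-\otimes_A B$ commutes with finite direct sums and with the differentials built from $d_K$, $d_C$ and $\varphi$, the cone construction commutes with base change, so that $\mathrm{Cone}(\varphi)\otimes_A B=\mathrm{Cone}(\varphi\otimes_A B)$. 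Thus it suffices to prove the following reduced statement: if $F^*$ is a bounded acyclic complex of flat $A$-modules, then $F^*\otimes_A B$ is acyclic for every $A$-algebra $B$. Applying this to $F^*=\mathrm{Cone}(\varphi)$ then shows that $\mathrm{Cone}(\varphi\otimes_A B)$ is acyclic, i.e. that $\varphi\otimes_A B$ is a quasi-isomorphism.

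To prove the reduced statement, I would cut the acyclic complex $F^*$ into short exact sequences. Writing $Z^i=\ker(d^i)=\mathrm{im}(d^{i-1})$ for the cocycles, acyclicity yields for each $i$ a short exact sequence
$$0\to Z^i\to F^i\to Z^{i+1}\to 0.$$
The crucial point is that every $Z^i$ is itself flat. This is proved by descending induction on $i$, and it is here that boundedness is essential in order to furnish a base case: for $i$ strictly larger than the top degree of $F^*$ one has $Z^{i+1}=0$, which is flat. For the inductive step, in the short exact sequence above $F^i$ is flat by hypothesis and $Z^{i+1}$ is flat by induction, and the long exact sequence of $\mathrm{Tor}$ then forces $\mathrm{Tor}_n^A(Z^i,-)=0$ for all $n\geq 1$, so $Z^i$ is flat.

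Once all the $Z^i$ are known to be flat, each short exact sequence above stays exact after applying $-\otimes_A B$, because $\mathrm{Tor}_1^A(Z^{i+1},B)=0$; this gives short exact sequences
$$0\to Z^i\otimes_A B\to F^i\otimes_A B\to Z^{i+1}\otimes_A B\to 0.$$
Splicing these back together identifies $\mathrm{im}(d^{i-1}\otimes_A B)=Z^i\otimes_A B=\ker(d^i\otimes_A B)$, so $F^*\otimes_A B$ is acyclic, which is what we wanted. The only genuinely delicate ingredient is the flatness of the cocycle modules $Z^i$, and the argument hinges entirely on the complexes being bounded so that the descending induction has a starting point; I would note in passing that the noetherian hypothesis on $A$ is not actually used in this proposition, only the flatness and finiteness assumptions.
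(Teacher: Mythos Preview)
Your proof is correct. The paper does not supply its own argument for this proposition; it simply records the statement as a reminder from Mumford's \emph{Abelian Varieties}, chapter~2, section~5. The proof you give---passing to the mapping cone and then showing by descending induction that the cocycle modules of a bounded acyclic complex of flat modules are themselves flat---is exactly the standard argument found there. Your closing observation that the noetherian hypothesis is not needed for this particular proposition is also correct; in Mumford's treatment the noetherian assumption is used for Proposition~\ref{Homoex} (to obtain a finite-type $K^*$), not for the preservation of quasi-isomorphisms under base change.
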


If we have a quasi-coherent sheaf $\mathcal{F}$ on a proper space $X$ over a noetherian affine scheme $A$, we can apply these two proposition to the Čech complex of $\mathcal{F}$, and we have the existence of a complex $K^*$ of $A$-module that computes the cohomology of the space $X$ universally, i.e. such that we have for every $A$-algebra $B$ and every integer $n$ the equality $H^n(X_B,\mathcal{F}_B)=H^n(K^*_A \otimes B)$.

\begin{defia} Let $\mathcal{A}$ be an affinoid algebra, and $X$ a proper $\mathcal{A}$-analytic space. Let $\mathcal{F}$ be a coherent sheaf on $X$ that is flat over $\mathcal{A}$, and let $X=\bigcup_{i\in I} X_i$ be $G$-covering of $X$ by a finite number of affinoid domains. Denote by $C^*$ the Čech complex associated to $\mathcal{F}$ relatively to this covering. Given these data, we will say that a morphism of affinoid algebras $p:\mathcal{M(B)}\to \mathcal{M(A)}$ satisfies the property ${{\mathcal{R}_\mathcal{A}}}$ if for every finite complex of finitely generated flat $A$-module $K^*$ and every quasi-isomorphism $\varphi:K^*\to C^*$ of complexes of $\mathcal{A}$-modules then $\varphi$ induce an isomorphism $H^n(K^*\hat{\otimes}_\mathcal{A} \mathcal{B})\to H^n(C^*\hat{\otimes}_\mathcal{A} \mathcal{B})$ i.e. $\varphi\hat{\otimes}_\mathcal{A} \mathcal{B}$ is a quasi-isomorphism.
\end{defia}

From now on, all the data involving the previous definition will be fixed.

\begin{remaa} \label{finiteR} Let $p:\mathcal{M(B)}\to \mathcal{M(A)}$ be a finite morphism between $k$-affinoid algebras. Then $p$ satisfies the property ${\mathcal{R}_\mathcal{A}}$. In fact, the complex $C^*$ satisfies the hypothesis of the proposition $\ref{homoprop}$, so for every finite flat complex of finitely generated $A$-module $K^*$ and every quasi-isomorphism $\varphi:K^*\to C^*$ of complexes of $\mathcal{A}$-modules, we have an isomorphism $H^n(K^*{\otimes}_\mathcal{A} \mathcal{B})\to H^n(C^*{\otimes}_\mathcal{A} \mathcal{B})$ and since $C^i$ and $K^i$ are noetherian and $p$ is finite, we have the equality $K^*\hat{\otimes}_\mathcal{A} \mathcal{B}=K^*{\otimes}_\mathcal{A} \mathcal{B}$ and $C^*\hat{\otimes}_\mathcal{A} \mathcal{B}=C^*{\otimes}_\mathcal{A} \mathcal{B}$, so $\varphi\hat{\otimes} \mathcal{B}$ is a quasi-isomorphism.

\end{remaa}

\begin{remaa}\label{flatproper} Let $p:\mathcal{M(B)}\to \mathcal{M(A)}$ be a flat morphism between affinoid algebras. Then $p$ satisfies ${\mathcal{R}_\mathcal{A}}$. In fact, let $K^*$ be a finite complex of finitely generated flat $A$-modules and $\varphi$ a quasi-isomorphism $\varphi:K^*\to C^*$ of complexes of $\mathcal{A}$-modules. Because $K^i$ is finite for all $i\in \mathbb{Z}$, and the $\mathcal{A}$-algebra $\mathcal{B}$ is flat, using theorem $\ref{aff}$, the arrow $H^n(K^* \hat{\otimes}_\mathcal{A}\mathcal{B})\to H^n(C^* \hat{\otimes}_\mathcal{A} \mathcal{B})$ is now identified with the arrow $H^n(K^*)\otimes_\mathcal{A}\mathcal{B}\to H^n(C^*)\otimes_\mathcal{A}\mathcal{B}$ and the latter is an isomorphism by definition of the complex $K^*$.

\end{remaa}

\begin{lemma} \label{fieldle} Let $L$ be a non-archimedean field extension of $k$ and $\mathcal{A}$ be a $k$-affinoid algebra. Then the natural morphism of analytic spaces $p:\mathcal{M}({A}_L)\to \mathcal{M(A)}$ satifies $\mathcal{R}_\mathcal{A}$.

\end{lemma}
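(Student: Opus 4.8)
The plan is to exploit Gruson's theorem that the completed base change functor $-\hat{\otimes}_k L$ is exact on admissible short exact sequences of $k$-Banach spaces. First I would record the identification that makes the statement a question over $k$: writing $\mathcal{A}_L := \mathcal{A}\hat{\otimes}_k L$, one has for every Banach $\mathcal{A}$-module $M$ a natural isomorphism $M\hat{\otimes}_\mathcal{A} \mathcal{A}_L \cong M\hat{\otimes}_k L$, coming from the associativity of the completed tensor product and the equality $M\hat{\otimes}_\mathcal{A}\mathcal{A} = M$. Applying this termwise to the complexes $K^*$ and $C^*$ identifies $\varphi\hat{\otimes}_\mathcal{A}\mathcal{A}_L$ with $\varphi\hat{\otimes}_k L$, so it suffices to prove that $\varphi\hat{\otimes}_k L$ is a quasi-isomorphism.

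Next I would pass to the mapping cone $D^* := \operatorname{Cone}(\varphi)$, with $D^n = K^{n+1}\oplus C^n$. Each $C^i$ is a finite product of affinoid algebras, hence a Banach $\mathcal{A}$-module, and each $K^i$ is a finitely generated module over the Noetherian affinoid algebra $\mathcal{A}$, so it carries a canonical Banach structure; moreover every $\mathcal{A}$-linear map out of a finite module is automatically bounded, so $\varphi$ is a morphism of complexes of Banach modules and all differentials of $D^*$ (built from $d_K$, $d_C$ and $\varphi$) are bounded. Since $\varphi$ is a quasi-isomorphism, the long exact cohomology sequence of the cone shows $D^*$ is acyclic. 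As $\operatorname{Cone}$ commutes with the additive functor $-\hat{\otimes}_k L$, we have $D^*\hat{\otimes}_k L = \operatorname{Cone}(\varphi\hat{\otimes}_k L)$, so it is enough to show that $D^*\hat{\otimes}_k L$ stays acyclic.

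The crucial step, and the one I expect to be the main obstacle, is to observe that the acyclic complex $D^*$ is automatically \emph{admissibly} acyclic, so that Gruson's theory may be applied termwise. Indeed, for each $n$ the kernel $Z^n := \ker(d^n)$ is closed, being the preimage of $0$ under a bounded map; by acyclicity it equals $\operatorname{im}(d^{n-1})$, which is therefore closed. The induced bounded surjection $D^{n-1}\to Z^n$ onto a Banach space is then open by the open mapping theorem for Banach modules over a complete non-archimedean field, so $d^{n-1}$ is strict. Hence $D^*$ splits into admissible short exact sequences of $k$-Banach spaces $0\to Z^n\to D^n\to Z^{n+1}\to 0$.

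Finally I would invoke Gruson's theorem \autocite{gruson1966theorie}: since $-\hat{\otimes}_k L$ sends admissible exact sequences of $k$-Banach spaces to admissible exact sequences, each $0\to Z^n\hat{\otimes}_k L\to D^n\hat{\otimes}_k L\to Z^{n+1}\hat{\otimes}_k L\to 0$ is again exact, and splicing these back together shows $D^*\hat{\otimes}_k L$ is acyclic. This makes $\varphi\hat{\otimes}_k L$, hence $\varphi\hat{\otimes}_\mathcal{A}\mathcal{A}_L$, a quasi-isomorphism, which is precisely the property $\mathcal{R}_\mathcal{A}$. The whole argument hinges on the admissibility claim of the previous paragraph; once that topological point is secured via the open mapping theorem, the flatness of $-\hat{\otimes}_k L$ in Gruson's strong sense does all the remaining work, and no hypothesis beyond completeness of $L/k$ is needed.
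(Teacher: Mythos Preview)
Your argument is correct and rests on the same key input as the paper's proof---Gruson's theorem that $-\hat{\otimes}_k L$ preserves admissible exact sequences of $k$-Banach spaces---but you organize the reduction differently. The paper argues directly that the differentials of the \v{C}ech complex $C^*$ are admissible (a consequence of Kiehl's finiteness theorem: since each $H^n(C^*)$ is a finite $\mathcal{A}$-module, the images of the differentials are closed), and likewise for $K^*$ (finite modules over a noetherian Banach algebra), so that for each complex separately one has $H^n(-\hat{\otimes}_k L)\cong H^n(-)\hat{\otimes}_k L$; the map in question is then the base change of an isomorphism. You instead pass to $\operatorname{Cone}(\varphi)$ and extract admissibility from acyclicity via the open mapping theorem, which is a clean and self-contained manoeuvre: it spares you from invoking the admissibility of the \v{C}ech differentials as a separate fact. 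The paper's route, on the other hand, yields the slightly stronger intermediate statement that cohomology of $C^*$ itself commutes with $-\hat{\otimes}_k L$, which is of independent interest. Both approaches need the open mapping theorem at bottom (the paper's admissibility claim for $C^*$ ultimately rests on it too), so the difference is one of packaging rather than of strength of hypotheses.
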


\begin{proof}[Proof] Let $K^*$ be a finite complex of finitely generated flat $A$-module and $\varphi$ a quasi-isomorphism $\varphi:K^*\to C^*$ of complexes of $\mathcal{A}$-modules. Then the differential of the Čech complex $C^*$ are admissible, and by theorem 1 of part 3 of \autocite{gruson1966theorie}, the functor $\hat{\otimes}_k L$ from $k$-Banach modules to $L$-Banach modules transform admissible exact sequences into admissible exact sequences, so for all $n\in\mathbb{Z}$, the morphism $H^n(K^*\hat{\otimes}_k L)\to H^n(C^*\hat{\otimes}_k L)$ is identified with the morphism $H^n(K^*)\hat{\otimes}_k L\to H^n(C^*)\hat{\otimes}_k L$ and the latter is an isomorphism by hypothesis, so $p$ satisfies $\mathcal{R}_\mathcal{A}$. \qedhere

\end{proof}

\begin{remaa}Note the previons lemma and the remark just before did not use the flatness of $K^*$.

\end{remaa}

\begin{lemma} \label{Rlemme}
Let $q:\mathcal{M(D)}\to \mathcal{M(B)}$ and $p:\mathcal{M(B)}\to \mathcal{M(A)}$ be morphisms of $k$-affinoid spaces. 
\begin{enumerate}
\item Assume that $p$ satisfies ${\mathcal{R}_\mathcal{A}}$ (e.g. flat) and $q$ is finite. Then $p\circ q $ satisfies ${\mathcal{R}_\mathcal{A}}$.
\item Assume that there exist a $G$-covering $\mathcal{M(B)}=\bigcup_{j\in J} \mathcal{M}(\mathcal{B}_j)$ of $\mathcal{M(B)}$ by a finite number of affinoid domains such that for all $j\in J$, the induced arrow $\mathcal{M}(\mathcal{B}_j)\to \mathcal{M(A)}$ satisfies ${\mathcal{R}_\mathcal{A}}$. Then $p$ satisfies ${\mathcal{R}_\mathcal{A}}$.
\item Assume that $p$ satisfies ${\mathcal{R}_\mathcal{A}}$ and $q$ satisfies $\mathcal{R}_\mathcal{B}$. Then $p\circ q $ satisfies $\mathcal{R}_A$.
\end{enumerate}
\end{lemma}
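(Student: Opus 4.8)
The plan is to treat the three points separately, with the third (composition with $q$ satisfying $\mathcal{R}_\mathcal{B}$) carrying the main idea and the first (composition with a finite $q$) following from it; the second is an independent faithfully flat descent argument. Throughout I keep the data of the definition fixed: a proper $\mathcal{A}$-space $X$ with a coherent sheaf $\mathcal{F}$ flat over $\mathcal{A}$, a finite $G$-covering, and the associated Čech complex $C^*$; let $K^*$ be a finite complex of finitely generated flat $\mathcal{A}$-modules and $\varphi:K^*\to C^*$ a quasi-isomorphism. The two facts I would invoke repeatedly are: (i) $C^*\hat{\otimes}_\mathcal{A}\mathcal{B}$ is exactly the Čech complex of $\mathcal{F}_\mathcal{B}$ on the proper $\mathcal{B}$-space $X_\mathcal{B}=X\times_\mathcal{A}\mathcal{M(B)}$, and since flatness is stable under base change, $\mathcal{F}_\mathcal{B}$ is flat over $\mathcal{B}$; (ii) because the $K^i$ are finite, $K^*\hat{\otimes}_\mathcal{A}\mathcal{B}=K^*\otimes_\mathcal{A}\mathcal{B}$ is a finite complex of finitely generated flat $\mathcal{B}$-modules, and completed tensor products are associative, so $\varphi\hat{\otimes}_\mathcal{A}\mathcal{D}=(\varphi\hat{\otimes}_\mathcal{A}\mathcal{B})\hat{\otimes}_\mathcal{B}\mathcal{D}$.

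For the third point, since $p$ satisfies $\mathcal{R}_\mathcal{A}$ the morphism $\varphi\hat{\otimes}_\mathcal{A}\mathcal{B}:K^*\otimes_\mathcal{A}\mathcal{B}\to C^*\hat{\otimes}_\mathcal{A}\mathcal{B}$ is a quasi-isomorphism. By (i) and (ii) this is precisely a quasi-isomorphism from a finite complex of finitely generated flat $\mathcal{B}$-modules to the Čech complex of the sheaf $\mathcal{F}_\mathcal{B}$, flat over $\mathcal{B}$, on the proper $\mathcal{B}$-space $X_\mathcal{B}$; that is, it is data of exactly the type to which $\mathcal{R}_\mathcal{B}$ applies (relative to the base-changed data $X_\mathcal{B}$). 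Hence $q$ satisfying $\mathcal{R}_\mathcal{B}$ forces $(\varphi\hat{\otimes}_\mathcal{A}\mathcal{B})\hat{\otimes}_\mathcal{B}\mathcal{D}$ to be a quasi-isomorphism, and by the associativity in (ii) this is $\varphi\hat{\otimes}_\mathcal{A}\mathcal{D}$, so $p\circ q$ satisfies $\mathcal{R}_\mathcal{A}$. The first point then follows at once: a finite morphism $q:\mathcal{M(D)}\to\mathcal{M(B)}$ satisfies $\mathcal{R}_\mathcal{B}$ by the argument of $\ref{finiteR}$ applied over the base $\mathcal{B}$ to $X_\mathcal{B}$, so the first point is the special case of the third where $q$ is finite. (Alternatively one argues directly: $\varphi\hat{\otimes}_\mathcal{A}\mathcal{B}$ is a quasi-isomorphism of finite complexes of flat $\mathcal{B}$-modules, so $\ref{homoprop}$ over $\mathcal{B}$ gives that $(\varphi\hat{\otimes}_\mathcal{A}\mathcal{B})\otimes_\mathcal{B}\mathcal{D}$ is a quasi-isomorphism, and finiteness of $q$ turns this ordinary tensor product into the completed one.)

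For the second point I would use faithfully flat descent along $\mathcal{B}\to\prod_{j\in J}\mathcal{B}_j$, which is faithfully flat because $J$ is finite and the $\mathcal{M}(\mathcal{B}_j)$ form a $G$-covering. It therefore suffices to show that each map $H^n(K^*\hat{\otimes}_\mathcal{A}\mathcal{B})\to H^n(C^*\hat{\otimes}_\mathcal{A}\mathcal{B})$ becomes an isomorphism after applying $-\otimes_\mathcal{B}\mathcal{B}_j$. On the source side, $H^n(K^*\otimes_\mathcal{A}\mathcal{B})$ is a finite $\mathcal{B}$-module and $\mathcal{B}_j$ is flat over $\mathcal{B}$, so this base change is $H^n(K^*\otimes_\mathcal{A}\mathcal{B}_j)=H^n(K^*\hat{\otimes}_\mathcal{A}\mathcal{B}_j)$. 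On the target side, $H^n(C^*\hat{\otimes}_\mathcal{A}\mathcal{B})=H^n(X_\mathcal{B},\mathcal{F}_\mathcal{B})$ is a finite $\mathcal{B}$-module, and the affinoid domain embedding $\mathcal{M}(\mathcal{B}_j)\hookrightarrow\mathcal{M(B)}$ is flat, so flat base change ($\ref{affbc}$, i.e. Kiehl's theorem) identifies its base change with $H^n(X_{\mathcal{B}_j},\mathcal{F}_{\mathcal{B}_j})=H^n(C^*\hat{\otimes}_\mathcal{A}\mathcal{B}_j)$. Under these identifications the base-changed map is the one induced by $\varphi\hat{\otimes}_\mathcal{A}\mathcal{B}_j$, which is an isomorphism because $\mathcal{M}(\mathcal{B}_j)\to\mathcal{M(A)}$ satisfies $\mathcal{R}_\mathcal{A}$. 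Faithful flatness then yields that $\varphi\hat{\otimes}_\mathcal{A}\mathcal{B}$ is a quasi-isomorphism, so $p$ satisfies $\mathcal{R}_\mathcal{A}$.

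I expect the main obstacle to be the bookkeeping in the second point. Faithfully flat descent naturally lives with the \emph{ordinary} tensor products $-\otimes_\mathcal{B}\mathcal{B}_j$ acting on the finite cohomology modules, whereas the Čech complexes are formed with \emph{completed} tensor products; reconciling the two is exactly what forces one to bring in flat base change ($\ref{affbc}$, ultimately $\ref{aff}$) on the target side rather than argue purely formally. By contrast, the third point is essentially formal once one notices that $\hat{\otimes}_\mathcal{A}\mathcal{B}$ converts the given quasi-isomorphism into admissible input for $\mathcal{R}_\mathcal{B}$ and that completed tensor products compose, and the first point is then only a matter of recognizing a finite morphism as a case to which $\ref{finiteR}$ already applies.
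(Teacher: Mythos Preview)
Your proof is correct and follows essentially the same route as the paper's: associativity of completed tensor products for point 3, the direct application of $\ref{homoprop}$ over $\mathcal{B}$ for point 1, and faithfully flat descent along $\mathcal{B}\to\prod_j\mathcal{B}_j$ combined with flat base change ($\ref{aff}$) for point 2. The only cosmetic difference is that you derive point 1 from point 3 by invoking $\ref{finiteR}$ over $\mathcal{B}$, whereas the paper argues point 1 directly --- but you also give that direct argument as your alternative.
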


\begin{proof}[Proof]
Let $K^*$ be a finite complex of finitely generated flat $A$-modules and $\varphi$ a quasi-isomorphism $\varphi:K^*\to C^*$ of complexes of $\mathcal{A}$-modules.

For the first point, we have a homomorphism $\varphi_\mathcal{B}:K \hat{\otimes}_\mathcal{A} \mathcal{B}\to C^* \hat{\otimes}_\mathcal{A} \mathcal{B}$ and by flatness of $p$ and remark $\ref{flatproper}$, this arrow induces an isomorphism $H^n(K^*\hat{\otimes}_\mathcal{A}\mathcal{B})\to H^n(C^*\hat{\otimes}_\mathcal{A}\mathcal{B})$ for all $n\in \mathbb{Z}$. Now we can apply $\ref{homoprop}$ to the complex $K^*\hat{\otimes}_\mathcal{A}\mathcal{B}$ and $C^*\hat{\otimes}_\mathcal{A}\mathcal{B}$ over $\mathcal{B}$, to obtain an isomorphism $H^n((K^*\hat{\otimes}_\mathcal{A}\mathcal{B})\otimes_\mathcal{B} \mathcal{D})\to H^n((C^*\hat{\otimes}_\mathcal{A}\mathcal{B})\otimes_\mathcal{B} \mathcal{D})$, and since $K^i$ is a finite $\mathcal{A}$-module for all $i\in \mathbb{Z}$ and $C^*\hat{\otimes}_\mathcal{A}\mathcal{B}$ is noetherian and $q$ is finite, we obtain an isomorphism $H^n((K^*\hat{\otimes}_\mathcal{A}\mathcal{B})\hat{\otimes}_\mathcal{B} \mathcal{D})\to H^n((C^*\hat{\otimes}_\mathcal{A}\mathcal{B})\hat{\otimes}_\mathcal{B} \mathcal{D})$, and this shows that $p\circ q $ satisfies ${\mathcal{R}_\mathcal{A}}$.

For the second point, we have an isomorphism $H^n(K^*\hat{\otimes}_\mathcal{A} {\bigoplus_{j\in J}\mathcal{B}_j})\to H^n(C^*\hat{\otimes}_\mathcal{A} {\bigoplus_{j\in J}\mathcal{B}_j})$. Since cohomology and completed tensor product commute with finite direct sums and the affinoid domain inclusion $\mathcal{M}(\mathcal{B}_i)\to \mathcal{M(B)}$ is flat, we can use $\ref{aff}$ to get an isomorphism $H^n(K^*)\hat{\otimes}_\mathcal{A} \bigoplus_{j\in J}\mathcal{B}_i\to H^n(C^*)\hat{\otimes}_\mathcal{A}\bigoplus_{j\in J}\mathcal{B}_i$. By flatness of the morphism $\mathcal{M}(\bigoplus_{j\in J} \mathcal{B}_j)\to\mathcal{M(B)}$, and descent proposition 3.11 of \autocite{daylies2021descente}, we deduce that the map $H^n(K^*\hat{\otimes}_\mathcal{A}\mathcal{B})\to H^n(C^*\hat{\otimes}_\mathcal{A}\mathcal{B})$ is an isomorphism, and $p$ satisfies ${\mathcal{R}_\mathcal{A}}$.

For the last point, since $p$ satisfies $\mathcal{R}_\mathcal{A}$, we have an isomorphism $H^n(K^*\hat{\otimes}_\mathcal{A} \mathcal{B})\to H^n(C^*\hat{\otimes}_\mathcal{A} \mathcal{B})$, and this show that the morphism of complexes of $\mathcal{B}$-modules $\varphi_\mathcal{B}$ induce an isomorphism on cohomology, so since $q$ satisfies $\mathcal{R}_\mathcal{B}$, we have an isomorphism $H^n(K^*\hat{\otimes}_\mathcal{A} \mathcal{B} \hat{\otimes}_\mathcal{B} \mathcal{D} )\to H^n(C^*\hat{\otimes}_\mathcal{A} \mathcal{B} \hat{\otimes}_\mathcal{B} \mathcal{D})$ so $p\circ q $ satisfies $\mathcal{R}_\mathcal{A}$.
\end{proof}

\begin{theoa} \label{propBC} Let $L$ be a non-archimedean field extension of $k$, and let $\mathcal{B}$ be an $L$-affinoid algebra. Then for every $k$-affinoid algebra and every morphism $p:\mathcal{M(B)}\to \mathcal{M(A)}$ of analytic spaces, $p$ satisfies the property ${\mathcal{R}_\mathcal{A}}$. In particular, for every proper $\mathcal{A}$-space $X$, and every coherent sheaf $\mathcal{F}$ on it that is flat over $\mathcal{M(A)}$, there exist a finite complex of finite and projective $\mathcal{A}$-modules $K^*$ such that for every $\mathcal{A}$-algebra $\mathcal{B}$ as above, we have an isomorphism $H^n(K^*\otimes_\mathcal{A} \mathcal{B})\to H^n(X_\mathcal{B},\mathcal{F}_\mathcal{B})$.
\end{theoa}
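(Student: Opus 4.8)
The overall plan is to reduce to the case where $\mathcal{B}$ is $k$-affinoid, and to treat that case by factoring an arbitrary affinoid morphism $G$-locally as a finite map followed by a flat one. I would first carry out the ground-field reduction. The structural morphism $\mathcal{A}\to\mathcal{B}$ is $k$-linear while $\mathcal{B}$ carries its $L$-structure, so by the universal property of the completed tensor product it factors through an $L$-affinoid morphism $\mathcal{A}_L:=\mathcal{A}\hat{\otimes}_k L\to\mathcal{B}$. Geometrically $p$ becomes
$$\mathcal{M}(\mathcal{B})\xrightarrow{\ q\ }\mathcal{M}(\mathcal{A}_L)\xrightarrow{\ r\ }\mathcal{M}(\mathcal{A}),$$
where $r$ is the morphism of Lemma \ref{fieldle}, hence satisfies $\mathcal{R}_\mathcal{A}$, and $q$ is a morphism of $L$-affinoid spaces. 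Since the whole theory is insensitive to the choice of complete non-archimedean ground field, the same-field statement proved below, applied over $L$, shows that $q$ satisfies $\mathcal{R}_{\mathcal{A}_L}$; Lemma \ref{Rlemme}(3) then gives that $p=r\circ q$ satisfies $\mathcal{R}_\mathcal{A}$.

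It thus remains to prove that any morphism $p:\mathcal{M}(\mathcal{B})\to\mathcal{M}(\mathcal{A})$ of $k$-affinoid spaces satisfies $\mathcal{R}_\mathcal{A}$. By Lemma \ref{Rlemme}(2) and compactness of $\mathcal{M}(\mathcal{B})$, it suffices to find around each $y\in\mathcal{M}(\mathcal{B})$ an affinoid domain $V\ni y$ such that $V\to\mathcal{M}(\mathcal{A})$ satisfies $\mathcal{R}_\mathcal{A}$. Here I would invoke Ducros's $G$-local factorization (corollary 4.7 of \autocite{ducros_2007}): after shrinking to such a $V$, the map $V\to\mathcal{M}(\mathcal{A})$ factors through a finite morphism $V\to\mathbb{A}^{d}_{\mathcal{M}(\mathcal{A})}$ followed by the projection, $d$ being the relative dimension at $y$. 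Exactly as in the proof of Lemma \ref{quasilisse}, compactness of $V$ allows me to replace $\mathbb{A}^{d}_{\mathcal{M}(\mathcal{A})}$ by a compact relative polydisk $D\to\mathcal{M}(\mathcal{A})$ containing the image of $V$, so that $V\to D$ is finite and $D\to\mathcal{M}(\mathcal{A})$ is flat. By Remark \ref{flatproper} the flat map $D\to\mathcal{M}(\mathcal{A})$ satisfies $\mathcal{R}_\mathcal{A}$, and composing with the finite map $V\to D$ via Lemma \ref{Rlemme}(1) shows that $V\to\mathcal{M}(\mathcal{A})$ satisfies $\mathcal{R}_\mathcal{A}$, as required.

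For the \emph{in particular} clause, fix a proper $\mathcal{A}$-space $X$, a coherent sheaf $\mathcal{F}$ on it flat over $\mathcal{M}(\mathcal{A})$, a finite affinoid $G$-covering of $X$, and the associated Čech complex $C^*$. Since $\mathcal{A}$ is noetherian, the groups $H^i(C^*)=H^i(X,\mathcal{F})$ are finite $\mathcal{A}$-modules by Kiehl's theorem, and each $C^p$ is $\mathcal{A}$-flat because $\mathcal{F}$ is flat over $\mathcal{M}(\mathcal{A})$. Proposition \ref{Homoex} then produces a bounded complex $K^*$ of finite $\mathcal{A}$-modules, free in degrees $1,\dots,n$ and flat in degree $0$, together with a quasi-isomorphism $\varphi:K^*\to C^*$; over the noetherian ring $\mathcal{A}$ a finite flat module is projective, so every $K^i$ is finite projective. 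For $p$ as above, property $\mathcal{R}_\mathcal{A}$ makes $\varphi\hat{\otimes}_\mathcal{A}\mathcal{B}$ a quasi-isomorphism, and finiteness of the $K^i$ gives $K^*\hat{\otimes}_\mathcal{A}\mathcal{B}=K^*\otimes_\mathcal{A}\mathcal{B}$; combined with the Čech identification $H^n(X_\mathcal{B},\mathcal{F}_\mathcal{B})=H^n(C^*\hat{\otimes}_\mathcal{A}\mathcal{B})$ this yields the isomorphism $H^n(K^*\otimes_\mathcal{A}\mathcal{B})\xrightarrow{\ \sim\ }H^n(X_\mathcal{B},\mathcal{F}_\mathcal{B})$.

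The main obstacle is the geometric heart of the second step: producing the $G$-local finite-then-flat factorization. The composition and locality formalism for $\mathcal{R}$ is already available, so the only genuinely non-formal ingredient is Ducros's relative Noether normalization, and the point demanding care is the passage from the non-affinoid relative affine space to an honest compact polydisk, which is precisely what makes Remark \ref{flatproper} applicable.
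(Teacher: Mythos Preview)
Your proof is correct and follows the same route as the paper: reduce to the $k$-affinoid case via Lemma~\ref{fieldle} and Lemma~\ref{Rlemme}(3), then use Ducros's $G$-local factorization (corollary~4.7 of \autocite{ducros_2007}) to write each restriction as a finite map followed by a flat one and conclude via Remarks~\ref{finiteR}, \ref{flatproper} and Lemma~\ref{Rlemme}. The only discrepancy is in how you state that corollary: it produces a finite morphism $V_y\to W_y$ onto an affinoid domain $W_y$ of a smooth $\mathcal{A}$-space $T_y$ (not a finite map to $\mathbb{A}^d_{\mathcal{M}(\mathcal{A})}$), and since $W_y\to\mathcal{M}(\mathcal{A})$ is already an affinoid flat morphism no detour through a polydisk is needed; your explicit treatment of the ``in particular'' clause is correct and is left implicit in the paper.
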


\begin{proof}[Proof]
The morphism $p$ can be written as the composition $\mathcal{M(B)}\to\mathcal{M}(\mathcal{A}_L)\to \mathcal{M(A)}$, and $\mathcal{M}(\mathcal{A}_L)\to \mathcal{M(A)}$ satisfies $\mathcal{R}_\mathcal{A}$ by \ref{fieldle} so using $\ref{Rlemme}$, we can assume that $L=k$, so we can assume that $\mathcal{B}$ is $k$-affinoid.

Let $p:\mathcal{M(B)}\to \mathcal{M(A)}$ be a morphism of $k$-affinoid spaces, and let $y\in \mathcal{B}$. By corollary 4.7 of \autocite{ducros_2007}, there exist an affinoid neighboorhood $V_y$ of $y$ in $\mathcal{M(B)}$, a relatively smooth $\mathcal{A}$-space $T_y$, an affinoid domain $W_y$ of $T_y$ and a finite morphism $V_y\to W_y$ such that the restriction $p_{\vert V_y}$ can be written as the composition $V_y\to W_y\to T_y\to \mathcal{M(A)}$.

We can extract a finite $G$-covering $\mathcal{M(B)}=\bigcup_{i\in I}V_i$ of $\mathcal{M(B)}$ from the $G$-covering $\mathcal{M(B)}=\bigcup_{y\in \mathcal{M(B)}}V_y$ and by part 2 of lemma $\ref{Rlemme}$, it is sufficient to show that each restriction $p_{\vert V_i}\to \mathcal{M(A)}$ satisfies the property $\mathcal{R}_\mathcal{A}$. Since $V_i\to W_i$ is finite, by remark $\ref{finiteR}$, it satisfies $\mathcal{R}_{\mathcal{O}(W_i)}$ and since $W_i\to \mathcal{M(A)}$ is flat, by remark $\ref{flatproper}$, it satisfies $\mathcal{R}_\mathcal{A}$. By the third point of lemma $\ref{Rlemme}$, we have that $V_i\to \mathcal{M(A)}$ satisfies $\mathcal{R}_\mathcal{A}$.
\end{proof}

\begin{remaa}
Let $\mathcal{M(A)}$ be a $k$-affinoid spaces and $s\in \mathcal{M(A)}$. Then we can identifie $\mathcal{M(H}(s))$ with a rigid point of $\mathcal{M(A}_{\mathcal{H}(s)})$, and write the inclusion $\mathcal{M(H}(s))\to \mathcal{M(A)}$ as a closed immersion $\mathcal{M(H}(s))\to \mathcal{M(A}_{\mathcal{H}(s)})$ followed by a base field extension morphism $\mathcal{M(A}_{\mathcal{H}(s)})\to \mathcal{M(A)}$. These two morphism satisfies the property $\mathcal{R}_{\mathcal{A}_{\mathcal{H}(s)}}$ (resp the property $\mathcal{R}_\mathcal{A}$), so by lemma $\ref{Rlemme}$, the composition $\mathcal{M}(\mathcal{H}(s))\to \mathcal{M(A)}$ satisfies the property $\mathcal{R}_\mathcal{A}$.

In particular, for any proper space $X$ over $\mathcal{M(A)}$, any flat coherent sheaf $\mathcal{F}$ over $X$ and any $s\in \mathcal{M(A)}$, there exist a finite complex $K^*$ of finite and projective $\mathcal{A}$-modules $K^*$ such that we have an isomorphism $H^n(K^*\otimes_\mathcal{A} \mathcal{H}(s))\to H^n(X_s,\mathcal{F}_s)$. This remark led us to the following corollary that was stated without proof in corollary 3.3.11 of \autocite{berkovich2012spectral}.

\end{remaa}

\begin{coroa}\label{coreuler} Let $f:X\to S$ be a proper morphism of $k$-analytic spaces, and let $\mathcal{F}$ be a coherent sheaf on $X$ which is flat over $S$. Then:
\begin{enumerate}
    \item for all $p\geq 0$ and $n\geq 0$, the set $\{s\in S\vert \dim_{\mathcal{H}(s)}H^p(X_s,\mathcal{F}_s)\geq n\}$ is a Zariski-closed subset of $S$.
    \item the Euler characteristic $\chi:X\to \mathbb{Z}$ defined by 
    \begin{equation}
        s\mapsto \chi(\mathcal{F}_s):=\sum_{p=0}^\infty (-1)^p \dim_{\mathcal{H}(s)} H^p(X_s,\mathcal{F}_s)
    \end{equation}
    is $G$-locally (and locally) constant on $S$.
\end{enumerate}

\end{coroa}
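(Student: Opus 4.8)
The plan is to deduce both statements from the universal complex produced by Theorem~\ref{propBC}. Both assertions are local on $S$: Zariski-closedness in~(1) can be tested on the members of a $G$-covering of $S$ by affinoid domains, and $G$-local (resp.\ local) constancy in~(2) is by definition checked on such a covering; so I may assume $S=\mathcal{M}(\mathcal{A})$ is $k$-affinoid. By Theorem~\ref{propBC} together with the remark preceding this corollary, there is one finite complex $K^{*}$ of finite projective $\mathcal{A}$-modules such that for \emph{every} $s\in S$ the base change along $\mathcal{A}\to\mathcal{H}(s)$ gives a canonical isomorphism $H^{p}(X_{s},\mathcal{F}_{s})\cong H^{p}(K^{*}\otimes_{\mathcal{A}}\mathcal{H}(s))$. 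The crucial point is that this holds for a \emph{single} complex $K^{*}$, uniformly in $s$, so both statements become assertions about the fibrewise cohomology of a fixed complex of finite projective modules, and the geometry of $X$ no longer intervenes.

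Write $d^{p}\colon K^{p}\to K^{p+1}$ for the differentials and $d^{p}_{s}$ for their base changes to $\mathcal{H}(s)$. Since $H^{p}(K^{*}\otimes\mathcal{H}(s))=\ker d^{p}_{s}/\operatorname{im}d^{p-1}_{s}$, rank--nullity gives, for every $s$,
\[
\dim_{\mathcal{H}(s)}H^{p}(X_{s},\mathcal{F}_{s})=\dim_{\mathcal{H}(s)}\bigl(K^{p}\otimes_{\mathcal{A}}\mathcal{H}(s)\bigr)-\operatorname{rank}d^{p}_{s}-\operatorname{rank}d^{p-1}_{s}.
\]
Here $s\mapsto\dim_{\mathcal{H}(s)}(K^{p}\otimes\mathcal{H}(s))$ is locally constant, being the rank of the finite projective module $K^{p}$ (constant on connected components). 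The function $s\mapsto\operatorname{rank}d^{p}_{s}$ is \emph{lower} semicontinuous for the Zariski topology: the locus $\{\operatorname{rank}d^{p}_{s}\ge r\}$ is the non-vanishing locus of the $r$-th Fitting ideal of $\operatorname{coker}d^{p}$ --- equivalently, after trivializing $K^{p}$ and $K^{p+1}$ on a refinement of the covering, the locus where some $r\times r$ minor of $d^{p}$ does not vanish --- hence it is Zariski-open. Consequently each $-\operatorname{rank}d^{p}_{s}$ is upper semicontinuous, and so is $\dim_{\mathcal{H}(s)}H^{p}(X_{s},\mathcal{F}_{s})$; this shows that $\{s:\dim_{\mathcal{H}(s)}H^{p}(X_{s},\mathcal{F}_{s})\ge n\}$ is Zariski-closed, which is~(1).

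For~(2) I sum the displayed identity with alternating signs. The rank terms cancel in pairs: each $\operatorname{rank}d^{p}_{s}$ appears once in the $p$-th summand and once in the $(p+1)$-st with opposite signs, so
\[
\chi(\mathcal{F}_{s})=\sum_{p}(-1)^{p}\dim_{\mathcal{H}(s)}H^{p}(X_{s},\mathcal{F}_{s})=\sum_{p}(-1)^{p}\operatorname{rank}_{\mathcal{A}}K^{p}.
\]
The right-hand side is an integer independent of $s$ on each connected component, hence $\chi$ is $G$-locally (and locally) constant, which is~(2).

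The step I expect to be the main obstacle is matching the ``Zariski topology'' on the Berkovich space $S$ with the vanishing loci appearing in the semicontinuity argument: one must check that the Fitting ideals of $\operatorname{coker}d^{p}$ are genuine coherent ideals whose zero sets are Zariski-closed subsets of $S$ (equivalently, that the finite projective modules $K^{p}$ can be trivialized on a suitable Zariski or $G$-refinement, so that the minors of $d^{p}$ are honest analytic functions on $S$). Everything else is the formal linear algebra of complexes of projective modules; the essential input that makes it work --- one fixed complex computing every fibre --- is exactly Theorem~\ref{propBC}.
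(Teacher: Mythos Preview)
Your proof is correct and follows essentially the same route as the paper: reduce to $S=\mathcal{M}(\mathcal{A})$ affinoid, invoke the universal complex $K^{*}$ from Theorem~\ref{propBC} (via the preceding remark), write $\dim H^{p}$ as $\dim K^{p}_{s}-\operatorname{rank}d^{p}_{s}-\operatorname{rank}d^{p-1}_{s}$, and deduce semicontinuity from the fact that the rank-drop loci are cut out by vanishing of minors, while the Euler characteristic reduces to the locally constant quantity $\sum(-1)^{p}\operatorname{rank}K^{p}$. The only cosmetic difference is that the paper phrases lower semicontinuity directly in terms of minors of $d^{p}$ rather than Fitting ideals, and cites a lemma of Ducros for the local constancy of the rank of a finite flat module; your concern about trivializing the $K^{p}$ so that the minors are honest analytic functions is exactly how the paper handles it.
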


\begin{proof}[Proof]
Both result are $G$-local on $S$ so we can assume that $S=\mathcal{M(A)}$ is $k$-affinoid and $X$ is a proper $\mathcal{A}$-space. By the previous remark, there exist a finite complex $K^*$ of finite and projective $\mathcal{A}$-modules $K^*$ such that we have an isomorphism $H^n(K^*\otimes_\mathcal{A} \mathcal{H}(s))\to H^n(X_s,\mathcal{F}_s)$. For $p\in \mathbb{Z}$, denote by $\delta^p:K^p\to K^{p+1}$ the $p$-th differential of the complex $K^*$. Then for all $s\in S$ we have the equality:

\begin{equation} \label{equdim}
\begin{split}
   \dim_{\mathcal{H}(s)} H^p(X_s,\mathcal{F}_s) = \dim_{\mathcal{H}(s)} \mathrm{Ker}(\delta^p\hat{\otimes}_\mathcal{A} \mathcal{H}(s))-\dim_{\mathcal{H}(s)} \mathrm{Im} (\delta^{p-1}\hat{\otimes}_\mathcal{A} \mathcal{H}(s))  \\ 
     =\dim_{\mathcal{H}(s)}(K^p\hat{\otimes}_\mathcal{A} \mathcal{H}(s))-\dim_{\mathcal{H}(s)} \mathrm{Im} (\delta^{p}\hat{\otimes}_\mathcal{A} \mathcal{H}(s))-\dim_{\mathcal{H}(s)} \mathrm{Im} (\delta^{p-1}\hat{\otimes}_\mathcal{A} \mathcal{H}(s))
\end{split}
\end{equation}

Now $s\mapsto \dim_{\mathcal{H}(s)} \mathrm{Im} (\delta^{p}\hat{\otimes}_\mathcal{A} \mathcal{H}(s))$ is a lower semicontinous function for the Zariski topology on $S$ because the locus where this dimension is less than a number $N$ is the Zariski closed subset defined by the vanishing of some finite number of minor of the matrix $K^p\to K^{p+1}$, so $s\mapsto \dim_{\mathcal{H}(s)} H^p(X_s,\mathcal{F}_s)$ is upper semicontinuous for the Zariski topology and this shows the first point.

For the second point, using $\ref{equdim}$, we have for all $s\in S$ the equality $$ \chi(\mathcal{F}_s)=\sum_{p=0}^\infty (-1)^p\dim_{\mathcal{H}(s)}(K^p\hat{\otimes}_\mathcal{A} \mathcal{H}(s)) $$ which is $G$-locally constant on $S$ since $K^p$ is flat over $\mathcal{A}$ using lemma 4.1.14 of \autocite{ducros2017families}. \qedhere

\end{proof}

The rest of this section follows the chapter 2, section 5 of the book of \autocite{mumford1974abelian} of Mumford. The proofs are quite similar, and we will give the arguments that differs from scheme theory.

\begin{lemma}
Let $Y$ be a reduced analytic space, $\mathcal{F}$ a coherent sheaf on it such that for all $y\in Y$, we have the equality $\dim_{\mathcal{H}(y)}[\mathcal{F}\otimes_{\mathcal{O}_Y} \mathcal{H}(y)]=r$. Then, $\mathcal{F}$ is a locally free of rank $r$ on $Y$.
\end{lemma}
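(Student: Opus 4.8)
The plan is to adapt the classical scheme-theoretic argument (Hartshorne II.5.8, or Mumford's treatment) to the analytic setting, the only substantive change being that the ordinary residue field is replaced by the completed residue field $\mathcal{H}(y)$, and that the final vanishing step is justified by reducedness in the guise of the spectral seminorm being a genuine norm. Since local freeness can be checked $G$-locally and reducedness passes to affinoid domains, I would first reduce to the case $Y=\mathcal{M}(\mathcal{A})$ with $\mathcal{A}$ a reduced $k$-affinoid algebra and $\mathcal{F}$ the coherent sheaf attached to a finite $\mathcal{A}$-module $M$. Note that because $\mathcal{H}(y)$ is a field extension of the residue field $\kappa(y)$ of the Noetherian local ring $\mathcal{O}_{Y,y}$, the hypothesis gives $\dim_{\kappa(y)}\bigl(M_y\otimes_{\mathcal{O}_{Y,y}}\kappa(y)\bigr)=\dim_{\mathcal{H}(y)}\bigl(\mathcal{F}\otimes_{\mathcal{O}_Y}\mathcal{H}(y)\bigr)=r$ at every point, so the number of generators of $M_y$ is $r$ for all $y$.

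Fix $y\in Y$. By Nakayama's lemma, lifting a $\kappa(y)$-basis of the fiber produces $r$ elements generating $M_y$, and after shrinking $Y$ to an affinoid neighborhood I obtain a surjection of coherent sheaves $\mathcal{O}_Y^r\twoheadrightarrow\mathcal{F}$. Let $\mathcal{N}$ be its kernel, which is coherent, giving the exact sequence $0\to\mathcal{N}\to\mathcal{O}_Y^r\to\mathcal{F}\to 0$. The whole point is to show $\mathcal{N}=0$. For any $y'\in Y$, tensoring the right-exact tail with $\kappa(y')$ yields a surjection $\kappa(y')^r\twoheadrightarrow\mathcal{F}\otimes\kappa(y')$ between $\kappa(y')$-vector spaces of the same dimension $r$, hence an isomorphism; therefore the image of $\mathcal{N}_{y'}\otimes\kappa(y')$ in $\kappa(y')^r$ vanishes. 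Consequently, for a section $s=(s_1,\dots,s_r)\in\mathcal{N}(Y)\subseteq\mathcal{A}^r$, each coordinate satisfies $s_i(y')=0$ in $\mathcal{H}(y')$ for every $y'\in Y$, since $\kappa(y')$ embeds in $\mathcal{H}(y')$.

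This is exactly where reducedness enters, and it is the main step at which the analytic argument departs from the algebraic one. The kernel of the evaluation map $\mathcal{A}\to\prod_{y'\in Y}\mathcal{H}(y')$ is the set of $a\in\mathcal{A}$ with $\rho(a):=\sup_{y'}|a(y')|=0$, which coincides with the nilradical of $\mathcal{A}$; since $\mathcal{A}$ is reduced, the spectral seminorm $\rho$ is a norm and this kernel is trivial. Hence each $s_i=0$, so $s=0$, giving $\mathcal{N}(Y)=0$ and thus $\mathcal{N}=0$. The map $\mathcal{O}_Y^r\to\mathcal{F}$ is then an isomorphism on this neighborhood, so $\mathcal{F}$ is free of rank $r$ there, which is what we want. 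The only genuine obstacle is the reducedness input, namely the theorem that on a reduced $k$-affinoid algebra a function vanishing at every point of the Berkovich spectrum is zero; everything else is the standard generate-by-Nakayama-and-count-dimensions scheme.
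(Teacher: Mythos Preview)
Your proof is correct and follows essentially the same approach as the paper, which simply refers to Mumford's argument (Chapter~II, \S5, Lemma~1 of \emph{Abelian Varieties}) together with the analytic-specific input cited as 2.5.2 of \autocite{ducros2017families}. You have written out the Nakayama-and-count-dimensions argument in full and correctly identified the one genuinely analytic step, namely that on a reduced $k$-affinoid algebra a function vanishing at every point of the Berkovich spectrum is zero (equivalently, that the spectral seminorm is a norm), together with the stability of reducedness under passage to affinoid domains.
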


\begin{proof} The proof is exactly the same proof as in lemma 1 of section 5 of the book \autocite{mumford1974abelian}, combined with 2.5.2 of  \autocite{ducros2017families} that is specific to the analytical case. 

\end{proof}

The next lemma is the analogue of lemma 2 of section 5 of \autocite{mumford1974abelian}.

\begin{lemma}
Let $S=\mathcal{M(A)}$ be a reduced $k$-affinoid space and let $\varphi:\mathcal{F}\to \mathcal{D}$ be a morphism of coherent locally free $\mathcal{O}_S$-sheaves. If $\dim_{\mathcal{H}(s)}[\textrm{Im}(\varphi\otimes \mathcal{H}(s))]$ is locally constant, then there are splittings :

  $$ \mathcal{F}\simeq \mathcal{F}_1 \oplus \mathcal{F}_2$$
  
  $$\mathcal{D}\simeq \mathcal{D}_1\oplus \mathcal{D}_2$$
  
  such that $\varphi_{\vert \mathcal{F}_1}=0$, $\textrm{Im}(\varphi)\subset \mathcal{D}_1$, and $\varphi_{\vert \mathcal{F}_2}\to \mathcal{D}_1$ is an isomorphism.

\end{lemma}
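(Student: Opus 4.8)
The plan is to follow Mumford's matrix argument almost verbatim, reducing the statement to a normal form computation for a matrix over $\mathcal{A}$, and to isolate the single point where the analytic setting forces a different input: the use of reducedness to make a residual block vanish. Throughout I argue in a neighborhood of a fixed point, so the splittings are produced on an affinoid neighborhood of each point of $S$ (which is what the statement is used for).

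First I would reduce to the free case and localize. Since $\mathcal{F}$ and $\mathcal{D}$ are locally free, I may fix a point $s_0 \in S$ and a neighborhood on which $\mathcal{F} \simeq \mathcal{O}_S^{\,m}$ and $\mathcal{D} \simeq \mathcal{O}_S^{\,n}$ are free, so that $\varphi$ is represented by a matrix $\Phi$ with entries in $\mathcal{A}$. Let $r$ be the locally constant value of $s \mapsto \dim_{\mathcal{H}(s)}\mathrm{Im}(\varphi \otimes \mathcal{H}(s))$ near $s_0$. Then $\Phi(s_0)$ has rank $r$, so some $r\times r$ minor $\Delta = \det A$ of $\Phi$ does not vanish at $s_0$; after permuting basis vectors I may assume $A$ is the top-left $r\times r$ block. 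I then pass to an affinoid neighborhood $U$ of $s_0$ on which $\Delta$ is invertible (for instance a small Laurent domain $\{\,s : |\Delta(s)| \geq \varepsilon\,\}$) and on which the dimension is constantly $r$; there $A \in \mathrm{GL}_r(\mathcal{A}_U)$.

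Next, over $\mathcal{A}_U$ I write $\Phi = \begin{pmatrix} A & B \\ C & D \end{pmatrix}$ and bring it to block-diagonal form by changes of basis. Right multiplication by $\begin{pmatrix} I & -A^{-1}B \\ 0 & I \end{pmatrix}$, an automorphism of $\mathcal{F}_{|U}$, clears $B$; left multiplication by $\begin{pmatrix} I & 0 \\ -C A^{-1} & I \end{pmatrix}$, an automorphism of $\mathcal{D}_{|U}$, then clears $C$; finally left multiplication by $\mathrm{diag}(A^{-1}, I)$ normalizes the corner, yielding $\begin{pmatrix} I_r & 0 \\ 0 & D' \end{pmatrix}$ with $D' = D - C A^{-1} B$. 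These operations are isomorphisms of the free sheaves on $U$, and the splitting of the new bases into their first $r$ and remaining vectors produces decompositions $\mathcal{F}_{|U} \simeq \mathcal{F}_2 \oplus \mathcal{F}_1$ and $\mathcal{D}_{|U} \simeq \mathcal{D}_1 \oplus \mathcal{D}_2$.

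The key and final step, where the analytic setting differs from the algebraic one, is to show $D' = 0$. For every $s \in U$ the rank of $\varphi \otimes \mathcal{H}(s)$ equals $r + \mathrm{rank}(D'(s))$, and since this rank is constantly $r$ on $U$, every entry of $D'$ vanishes at every point of $U$. Here I invoke reducedness: the affinoid subdomain $U$ of the reduced space $S$ is again reduced (reducedness of $k$-affinoid algebras is inherited by affinoid subdomains, as one sees by comparing the associated local rings), and in a reduced $k$-affinoid algebra a function vanishing at all points of $\mathcal{M}(\mathcal{A}_U)$ is zero, the intersection of the kernels of the maps $\mathcal{A}_U \to \mathcal{H}(s)$ being the nilradical. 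Hence $D' = 0$, so $\varphi$ has the form $\mathrm{diag}(I_r, 0)$ in the new bases, and the splittings satisfy $\varphi_{|\mathcal{F}_1} = 0$, $\mathrm{Im}(\varphi) \subset \mathcal{D}_1$, and $\varphi_{|\mathcal{F}_2} : \mathcal{F}_2 \to \mathcal{D}_1$ the identity, hence an isomorphism. I expect this reducedness step, both verifying that it is preserved on $U$ and that pointwise vanishing forces vanishing, to be the main and essentially the only non-formal obstacle, everything else being the matrix manipulation imported directly from the scheme-theoretic proof.
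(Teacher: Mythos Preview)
Your matrix argument is correct and is exactly Mumford's; the point that reducedness passes to affinoid subdomains and that in a reduced affinoid algebra a function vanishing at every point is zero is handled properly. But you only produce splittings on an affinoid neighbourhood of each point, while the lemma as stated asks for splittings of $\mathcal{F}$ and $\mathcal{D}$ as sheaves on all of $S=\mathcal{M(A)}$, i.e.\ of the corresponding finite $\mathcal{A}$-modules. Local splittings on a $G$-cover do not glue for free: the obstruction to splitting $0\to\operatorname{Im}(\varphi)\to\mathcal{D}\to\operatorname{Coker}(\varphi)\to 0$ globally lives in an $\operatorname{Ext}^1$ over $\mathcal{A}$, not in a sheaf $H^1$.

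The paper singles out a different step as the genuinely analytic one. After the local matrix computation shows that $\ker(\varphi)$ and $\operatorname{Coker}(\varphi)$ are coherent and locally free, one needs that a coherent locally free $\mathcal{O}_S$-module has \emph{projective} global sections over $\mathcal{A}$; then the two short exact sequences split over $\mathcal{A}$ and the lemma follows. The paper proves this projectivity by comparing local rings: for a rigid point $s$ with image $x\in\operatorname{Spec}\mathcal{A}$, the map $\mathcal{O}_{\operatorname{Spec}\mathcal{A},x}\to\mathcal{O}_{S,s}$ is faithfully flat, so freeness of $M\otimes\mathcal{O}_{S,s}$ descends to projectivity of $M_x$. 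Thus your claim that the reducedness step is ``the main and essentially the only non-formal obstacle'' misses the point the paper is actually making. You are right, however, that for the intended application the conclusion is only used $G$-locally on $S$, so your weaker version would suffice there.
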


\begin{proof}
The proof is the same as in section 5 of \autocite{mumford1974abelian} once we know that if $\mathcal{F}$ is a coherent locally free $\mathcal{O}_S$-module, the associated $\mathcal{A}$-module $M:=\mathcal{H}^0(S,\mathcal{F})$ is projective. 

So let $\mathcal{F}$ be a coherent locally free $\mathcal{O}_S$-module, $S^{\textrm{alg}}:=\Spec \mathcal{A}$ and let $s\in S$ be a rigid point. It is sufficient to show that if $x$ is the image of $s$ in $S^\textrm{alg}$, then $M_x$ is free. We know that $M\otimes_\mathcal{A}\mathcal{O}_{S,s}$ is a free $\mathcal{O}_{S,s}$-module, so by faithfully flatness of $\mathcal{O}_{S,s}$ over $\mathcal{O}_{S^{\textrm{alg}},x}$, we knows that $M\otimes_\mathcal{A}\mathcal{O}_{S^{\textrm{alg}},x}$ is projective by \autocite{https://doi.org/10.48550/arxiv.1011.0038} so it is free over $\mathcal{O}_{S^{\textrm{alg}},x}$, and $M$ is a projective $\mathcal{A}$-module.
\end{proof}

From these two lemma, we deduce the following theorem, whose proof is the same as in \autocite{mumford1974abelian}, because since the complex $K^*$ and the cohomology groups are finite modules, there is no completion in the tensors products. 

\begin{theoa}\label{finver}

Let $f:X\to S$ be a proper morphism of $k$-analytic spaces with $S$ connected and reduced, and $\mathcal{F}$ a coherent sheaf on $X$ that is flat on $S$. Then, for all $p\in \mathbb{N}$, there is an equivalence between:
\begin{enumerate}
    \item the function $s\in S\mapsto \dim_{\mathcal{H}(s)} H^p(X_s,\mathcal{F}_s)$ is constant ;
    \item the sheaf $R^p f_*(\mathcal{F})$ is locally free on $S$, and for all $s\in S$, the natural map $R^p f_*(\mathcal{F})\otimes_{\mathcal{O}_S} \mathcal{H}(s)\to H^p(X_s,\mathcal{F}_s)$ is an isomorphism.
\end{enumerate}
If these equivalent conditions are fulfilled, then the natural map $R^{p-1} f_*(\mathcal{F})\otimes_{\mathcal{O}_S} \mathcal{H}(s)\to H^{p-1}(X_s,\mathcal{F}_s)$ is an isomorphism for all $s\in S$.

\end{theoa}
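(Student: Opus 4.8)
The plan is to reduce to the affinoid case and transport everything to the universal complex furnished by Theorem \ref{propBC}, after which the argument becomes the purely homological one of Mumford. The essential point making this legitimate is that the complex consists of \emph{finite} projective modules, so that every $\hat{\otimes}_{\mathcal{A}}\mathcal{H}(s)$ coincides with an ordinary $\otimes_{\mathcal{A}}\mathcal{H}(s)$ and no completion obstructs the homological algebra; this is the conceptual crux, and it has been arranged in advance by Theorem \ref{propBC}. Concretely, I first observe that both assertions are $G$-local on $S$, away from the word ``constant''; since $S$ is connected, a locally constant function on $S$ is constant, so I may assume $S=\mathcal{M}(\mathcal{A})$ is $k$-affinoid and reduced and $X$ a proper $\mathcal{A}$-space. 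By Theorem \ref{propBC} there is a finite complex $K^\bullet$ of finite projective $\mathcal{A}$-modules with $H^n(K^\bullet\otimes_{\mathcal{A}}\mathcal{B})\cong H^n(X_{\mathcal{B}},\mathcal{F}_{\mathcal{B}})$ for all admissible $\mathcal{B}$; taking $\mathcal{B}=\mathcal{A}$ identifies $R^pf_*\mathcal{F}$ with the sheaf attached to $H^p(K^\bullet)$, and taking $\mathcal{B}=\mathcal{H}(s)$ gives $H^p(X_s,\mathcal{F}_s)=H^p(K^\bullet\otimes_{\mathcal{A}}\mathcal{H}(s))$, the base change map becoming the natural arrow $H^p(K^\bullet)\otimes_{\mathcal{A}}\mathcal{H}(s)\to H^p(K^\bullet\otimes_{\mathcal{A}}\mathcal{H}(s))$.

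Since each $K^p$ is finite projective over the reduced connected $\mathcal{A}$, it is locally free of constant rank, so $s\mapsto\dim_{\mathcal{H}(s)}(K^p\otimes_{\mathcal{A}}\mathcal{H}(s))$ is constant. The implication (2)$\Rightarrow$(1) is then immediate: if $R^pf_*\mathcal{F}$ is locally free and the base change map is an isomorphism, then $\dim_{\mathcal{H}(s)}H^p(X_s,\mathcal{F}_s)$ equals the rank of $R^pf_*\mathcal{F}$, which is locally constant, hence constant on the connected $S$. For the reverse implication I would write $\delta^q\colon K^q\to K^{q+1}$ for the differentials and $r^q(s):=\dim_{\mathcal{H}(s)}\operatorname{im}(\delta^q\otimes_{\mathcal{A}}\mathcal{H}(s))$, and use the dimension formula underlying Corollary \ref{coreuler}, namely
\[
\dim_{\mathcal{H}(s)}H^p(X_s,\mathcal{F}_s)=\dim_{\mathcal{H}(s)}(K^p\otimes_{\mathcal{A}}\mathcal{H}(s))-r^p(s)-r^{p-1}(s).
\]
Each $r^q$ is lower semicontinuous for the Zariski topology, the locus $\{r^q\ge n\}$ being cut out by the nonvanishing of some minor, so every $s_0$ admits a Zariski neighborhood $W$ on which $r^p\ge r^p(s_0)$ and $r^{p-1}\ge r^{p-1}(s_0)$.

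If $\dim_{\mathcal{H}(s)}H^p(X_s,\mathcal{F}_s)$ is constant, then $r^p+r^{p-1}$ is constant on $W$, and adding the two inequalities forces both to be equalities there; thus $r^p$ and $r^{p-1}$ are \emph{locally constant}. This deduction — extracting local constancy of the individual ranks from constancy of a single Euler-type quantity — is the step I expect to be the main obstacle, and it is exactly where reducedness and connectedness of $S$ are used. I would then feed this into the splitting lemma (the second of the two preceding lemmas) applied to $\delta^{p-1}$ and to $\delta^p$, both of which now have locally constant image dimension. This yields direct-sum decompositions of $K^{p-1},K^p,K^{p+1}$ exhibiting $\operatorname{im}(\delta^{p-1})$ and $\ker(\delta^p)$ as locally free direct summands of $K^p$; since $\operatorname{im}(\delta^{p-1})\subseteq\ker(\delta^p)$, the modular law makes $H^p(K^\bullet)=\ker(\delta^p)/\operatorname{im}(\delta^{p-1})$ a locally free summand, which is the first half of (2).

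Because kernels and images of split maps commute with $-\otimes_{\mathcal{A}}\mathcal{H}(s)$, the formation of $H^p$ commutes with $-\otimes_{\mathcal{A}}\mathcal{H}(s)$, giving the base change isomorphism at level $p$; alternatively, local freeness of $R^pf_*\mathcal{F}$ also follows from the constancy of $\dim_{\mathcal{H}(s)}(R^pf_*\mathcal{F}\otimes\mathcal{H}(s))$ via the first preceding lemma. Finally, for the supplementary statement at $p-1$, only the splitting of $\delta^{p-1}$ is needed: it makes $\ker(\delta^{p-1})$ a locally free summand whose formation commutes with base change, and right-exactness of $-\otimes_{\mathcal{A}}\mathcal{H}(s)$ applied to $\operatorname{im}(\delta^{p-2})\to\ker(\delta^{p-1})\to H^{p-1}(K^\bullet)\to 0$ then identifies $H^{p-1}(K^\bullet)\otimes_{\mathcal{A}}\mathcal{H}(s)$ with $H^{p-1}(K^\bullet\otimes_{\mathcal{A}}\mathcal{H}(s))$. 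Hence the map at $p-1$ is an isomorphism even though $R^{p-1}f_*\mathcal{F}$ need not be locally free, and the theorem follows.
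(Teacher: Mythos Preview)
Your proof is correct and follows exactly the approach the paper indicates: reduce to the affinoid case, invoke the universal finite projective complex $K^\bullet$ of Theorem~\ref{propBC} so that all tensor products are ordinary, and then run Mumford's argument using the two preceding lemmas (constancy of fibre rank on a reduced base implies local freeness; locally constant image rank implies a splitting of the differential). The paper's own proof is just the one-line remark that, since $K^*$ and its cohomology are finite, no completions intervene and Mumford's proof carries over verbatim---your write-up is a faithful expansion of precisely that.
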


\printbibliography

@misc{https://doi.org/10.48550/arxiv.1101.0683,
  doi = {10.48550/ARXIV.1101.0683},
  
  url = {https://arxiv.org/abs/1101.0683},
  
  author = {Ducros, Antoine},
  
  keywords = {Algebraic Geometry (math.AG), FOS: Mathematics, FOS: Mathematics, 14A99, 14F20},
  
  title = {Étale cohomology of schemes and analytic spaces},
  
  publisher = {arXiv},
  
  year = {2011},
  
  copyright = {arXiv.org perpetual, non-exclusive license}
}

@misc{https://doi.org/10.48550/arxiv.1011.0038,
  doi = {10.48550/ARXIV.1011.0038},
  
  url = {https://arxiv.org/abs/1011.0038},
  
  author = {Perry, Alexander},
  
  keywords = {Commutative Algebra (math.AC), FOS: Mathematics, FOS: Mathematics},
  
  title = {Faithfully flat descent for projectivity of modules},
  
  publisher = {arXiv},
  
  year = {2010},
  
  copyright = {arXiv.org perpetual, non-exclusive license}
}

@article{gruson1966theorie,
  title={Th{\'e}orie de Fredholm $ p $-adique},
  author={Gruson, Laurent},
  journal={Bulletin de la Soci{\'e}t{\'e} Math{\'e}matique de France},
  volume={94},
  pages={67--95},
  year={1966}
}

@book{mumford1974abelian,
  title={Abelian varieties},
  author={Mumford, David and Ramanujam, Chidambaran Padmanabhan and Manin, Jurij Ivanovi{\v{c}}},
  volume={3},
  year={1974},
  publisher={Oxford university press Oxford}
}

@article{daylies2021descente,
  title={Descente fid$\backslash$element plate et alg$\backslash$'ebrisation en g$\backslash$'eom$\backslash$'etrie de Berkovich},
  author={Daylies, Mathieu},
  journal={arXiv preprint arXiv:2103.10490},
  year={2021}
}

@article{kiehl1967endlichkeitssatz,
  title={Der Endlichkeitssatz f{\"u}r eigentliche Abbildungen in der nichtarchimedischen Funktionentheorie},
  author={Kiehl, Reinhardt},
  journal={Inventiones mathematicae},
  volume={2},
  number={3},
  pages={191--214},
  year={1967},
  publisher={Springer}
}

@misc{stacks-project,
  author       = {The {Stacks project authors}},
  title        = {The Stacks project},
  howpublished = {\url{https://stacks.math.columbia.edu}},
  year         = {2021},
}

@misc{bosch1984non,
  title={Non-Archimedean analysis, volume 261 of Grundlehren der Mathematischen Wissenschaften [Fundamental Principles of Mathematical Sciences]},
  author={Bosch, Siegfried and G{\"u}ntzer, Ulrich and Remmert, Reinhold},
  year={1984},
  publisher={Springer-Verlag, Berlin}
}

@book{berkovich2012spectral,
  title={Spectral theory and analytic geometry over non-Archimedean fields},
  author={Berkovich, Vladimir G},
  number={33},
  year={2012},
  publisher={American Mathematical Soc.}
}

@article{PMIHES_1993__78__5_0,
     author = {Berkovich, Vladimir G.},
     title = {\'Etale cohomology for non-Archimedean analytic spaces},
     journal = {Publications Math\'ematiques de l'IH\'ES},
     pages = {5--161},
     publisher = {Institut des Hautes \'Etudes Scientifiques},
     volume = {78},
     year = {1993},
     zbl = {0804.32019},
     mrnumber = {95c:14017},
     language = {en},
     
}

@article{ducros_2007, title={Variation de la dimension relative en géométrie analytique p-adique}, volume={143}, DOI={10.1112/S0010437X07003193}, number={6}, journal={Compositio Mathematica}, publisher={London Mathematical Society}, author={Ducros, Antoine}, year={2007}, pages={1511–1532}}

@article {ducros2017families,
    AUTHOR = {Ducros, Antoine},
     TITLE = {Families of {B}erkovich spaces},
   JOURNAL = {Ast\'{e}risque},
  FJOURNAL = {Ast\'{e}risque},
      YEAR = {2018},
    NUMBER = {400},
     PAGES = {vii+262},
      ISSN = {0303-1179},
      ISBN = {978-2-85629-885-5},
   MRCLASS = {14G22},
  MRNUMBER = {3826929},
MRREVIEWER = {Marco Maculan},
}

\end{document}